\newcommand{\Param}{\Pi}
\newcommand{\Dsol}{\mathfrak{D}}
\title{Sensitivity Analysis\\for Mirror-Stratifiable Convex Functions}
\author{Jalal Fadili\thanks{Normandie Univ, ENSICAEN, CNRS, GREYC, France.} \and J\'er\^ome Malick\thanks{CNRS and LJK, Grenoble, France} \and Gabriel Peyr\'e\thanks{CNRS and DMA, ENS Paris, France.} }
\begin{document}

\maketitle


\begin{abstract}
This paper provides a set of sensitivity analysis and activity identification results for a class of convex functions with a strong geometric structure, that we coined ``mirror-stratifiable''.
These functions are such that there is a bijection between a primal and a dual stratification of the space into partitioning sets, called strata.  
This pairing is crucial to track the strata that are identifiable by solutions of parametrized optimization problems or by iterates of optimization algorithms. 
This class of functions encompasses all regularizers routinely used in signal and image processing, machine learning, and statistics. 
We show that this ``mirror-stratifiable'' structure enjoys a nice sensitivity theory, allowing us to study stability of solutions of optimization problems to small perturbations, as well as activity identification of first-order proximal splitting-type algorithms.
Existing results in the literature typically assume that, under a non-degeneracy condition, the active set associated to a minimizer is stable to small perturbations and is identified in finite time by optimization schemes. In contrast, our results do not require any non-degeneracy assumption: in consequence, the optimal active set is not necessarily stable anymore, but we are able to track precisely the set of identifiable strata.
We show that these results have crucial implications when solving challenging ill-posed inverse problems via regularization, a typical scenario where the non-degeneracy condition is not fulfilled. Our theoretical results, illustrated by numerical simulations, allow us to characterize the instability behaviour of the regularized solutions, by locating the set of all low-dimensional strata that can be potentially identified by these solutions.
\end{abstract}

\begin{keywords}
convex analysis, inverse problems, sensitivity, active sets, first-order splitting algorithms, applications in imaging and machine learning
\end{keywords}

\begin{AMS}
65K05, 65K10, 90C25, 90C31.
\end{AMS}



\section{Introduction}

Variational methods and non-smooth optimization algorithms are ubiquitous to solve large-scale inverse problems in various fields of science and engineering, and in particular in data science. The non-smooth structure of the optimization problems promotes solutions conforming to some notion of simplicity or low-complexity (e.g.\;sparsity, low-rank, etc.). The low-complexity structure is often manifested in the form of a low-dimensional ``active set". It is thus of prominent interest to be able to quantitatively characterize the stability of these active sets to perturbations of the objective function. Of crucial importance is also the identification in finite time of these active sets by iterates of optimization algorithms that numerically minimize the objective function. This type of problems and results is referred to as ``activity identification''; a review of the relevant literature will be provided in the sequel (at the beginning of each section). 
In a nutshell, the existing identification results guarantee a perfect stability of \hl{the active set} to perturbations, or a finite identification of \hl{this active set} via algorithmic schemes, under some non-degeneracy conditions (see in particular \cite{lewis2002active,drusvyatskiy2013optimality,2014-vaiter-ps-review}). The crucial non-degeneracy assumption, that takes generally the form~\eqref{eq-ri-condition}, can be viewed as a geometric generalization of strict complementary in non-linear programming. However, as we will illustrate shortly through some preliminary numerics, such a condition is too stringent and is often barely verified. The goal of this paper is to investigate the situation where this non-degeneracy assumption is violated. 

\subsection{Motivating Examples}
\label{sec-motivation}

In order to better grasp the relevance of our analysis, let us first start with the setting of inverse problems that pervades various fields including signal processing and machine learning.
We will come back to this setting in Section~\ref{sec-ip-regul} with further discussions and references.

\paragraph{Regularized Inverse Problems}

Assume one observes 
\eql{\label{eq-fwd-ip}
	y = \yorig + w \in \RR^P
	\qwhereq
	\yorig \eqdef \Phi \xorig
}
where $w$ is some perturbation (called noise) and $\Phi \in \RR^{P \times N}$ (called forward operator, or design matrix in statistics). Solving an inverse problem amounts to recovering $\xorig$, to a good approximation, knowing $y$ and $\Phi$ according to \eqref{eq-fwd-ip}. Unfortunately, in general, $P$ can be much smaller than the ambient dimension $N$, and when $P = N$, the mapping $\Phi$ is in general ill-conditioned or even singular. 

A classical approach is then to assume that $\xorig$ has some "low-complexity", and to use a prior regularization $R$ promoting solutions with such low-complexity.
This leads to the following optimization problem
\begin{align}\label{eq-variational-ip-intro-noisy}\tag{$\Pp(\la,y)$}
	&\umin{x \in \RR^N}  E(x,(\la,y)) \eqdef R(x) + \frac{1}{2\la}\norm{ y - \Phi x }^2, 
\end{align}
Let us discuss two popular examples of regularizing functions promoting a low-complexity structure. These two functions will be used in our numerical experiments.

\begin{exmp}[$\ell_1$ norm]\label{ex:l1}
		For $x \in \RR^N$, its $\ell_1$ norm reads
		\eql{\label{eq-lun-norm}
			R(x) = \norm{x}_1 \eqdef \sum_{i=1}^N |x^i|,
		}
where $x^i$ is the $i$-th entry of $x$. As advocated for instance by~\cite{chen1999atomi,tibshirani1996regre}, the $\ell_1$ enforces the solutions of~\eqref{eq-variational-ip-intro-noisy} to be \textit{sparse}, i.e. to have a small number of non-zero components. Indeed, the $\ell_1$ norm can be shown to be the tightest convex relaxation (in the sense of bi-conjugation) of the $\ell_0$ pseudo-norm restricted to the unit Euclidian ball~\hl{\cite{leconvrank12}}. Recall that $\ell_0$ pseudo-norm of $x \in \RR^N$ measures its sparsity  
		\eq{
			\norm{x}_0 \eqdef \sharp\enscond{i \in \{1,\ldots,N\}}{x^i \neq 0}.
		}
		Sparsity has witnessed a huge surge of interest in the last decades. 
		For instance, in signal and imaging sciences, one can approximate most natural signals and images using sparse expansions in an appropriate dictionary (see e.g.~\cite{mallat2009a-wav}). 
		In statistics, sparsity is a key toward model selection and interpretability
~\cite{buhlmann2011statistics}. 
\end{exmp}

\begin{exmp}[Nuclear norm]\label{ex:nuc} 
		For a matrix $x \in  \RR^{n_1 \times n_2} \sim \RR^N$, where $N=n_1n_2$, the nuclear norm is defined as 
		\eq{
			\norm{x}_* \eqdef \norm{\si(x)}_1 = \sum_{i=1}^{n} \si^i(x), 
		}
		where $n \eqdef \min(n_1,n_2)$ and 
	 	$\si(x) = (\si^1(x),\ldots,\si^n(x)) \in \RR_+^n$ is the vector of singular values of $x$. 
		The nuclear norm is the tightest convex relaxation of the rank (in the sense of bi-conjugation) restricted to the unit Frobenius ball~\cite{hiriart2012convex}. This underlies its wide use to promote solutions of~\eqref{eq-variational-ip-intro-noisy} with low rank, where we recall $\rank(x) \eqdef \norm{\si(x)}_0$. Low-rank regularization has proved useful for a variety of applications, including control theory and machine learning; see~e.g.\;\cite{fazel2002matrix,bach2008trace,candes2009exact}
\end{exmp}


\paragraph{Active set(s) identification}

The above discussed regularizers $R$ are non-smooth convex functions. This non-smoothness arises in a highly structured fashion and is usually associated, locally, with some low-dimensional active subset of $\RR^N$ (in many cases, such a subset is an affine or a smooth manifold). Thus $R$ will favor solutions of~\eqref{eq-variational-ip-intro-noisy} that lie in a low-dimensional active set \hl{and would allow the inversion of} the system~\eqref{eq-fwd-ip} in a \textit{stable} way. More precisely, one would like that under small perturbations $w$, the solutions of~\eqref{eq-variational-ip-intro-noisy} move stably along the active set. A byproduct of this behaviour is that from an algorithmic perspective, if an optimization algorithm is used to solve~\eqref{eq-variational-ip-intro-noisy}, one would hope that the iterates of the scheme identify the active set in finite time. 

Identifying the low-dimensional active set in a stable way is highly desirable for several reasons. One reason is that it is a fundamental property most practitioners are looking for. Typical examples include neurosciences~\cite{Ekanadham2014} where the goal is to recover a spike train from neural activity, or astrophysics~\cite{StarckBook06} where it is desired to separate stars from a background. In both examples, sparsity can be used as a modeling hypothesis and the recovery method should comply with it in a stable way. A second reason is algorithmic since one can also take advantage of the low-dimensionality of the identified active set to reduce computational burden and memory storage, hence opening the door to higher-order acceleration of optimization algorithms (see~\cite{Lemarechal-ULagrangian,miller2005newton}). 

Unfortunately, 
this desirable behaviour rarely occurs in practical applications. Yet one still observes some form of partial stability (to be given a rigorous meaning in Section~\ref{sec-sentivity-hybrid} and~\ref{sec-ip-regul}), as confirmed by the numerical experiment of the next paragraph. 

\subsection{Illustrative numerical experiment}
\label{sec-motivnum}

We consider a simple ``compressed sensing'' scenario, with the $\ell_1$ norm $R=\norm{\cdot}_1$~\cite{candes2005decoding} and the nuclear norm $R=\norm{\cdot}_*$~\cite{candes2010power} as regularizers. 
The operator $\Phi \in \RR^{N \times P}$ is drawn uniformly at random from the standard Gaussian ensemble, i.e. the entries of $\Phi$ are independent and identically distributed Gaussian random variables with zero-mean and unit variance.
In the case $R=\norm{\cdot}_1$, we set $(N,P)=(100,50)$ and the vector to recover $\xorig$ is drawn uniformly at random among sparse vectors with $R_0(\xorig) \eqdef \norm{\xorig}_0=10$ and unit non-zero entries. 
In the case $R=\norm{\cdot}_*$, we set $(N,P)=(400,300)$ ($n_1=n_2=n=20$) and the matrix to recover $\xorig$ is drawn uniformly at random among low-rank matrices with $R_0(\xorig) \eqdef \rank(\xorig)=4$ and unit non-zero singular values. 
For each problem suite $(R,N,P)$, $1000$ realizations of $(\xorig,\Phi,w)$ are drawn, and $y$ is then generated according to \eqref{eq-fwd-ip}.
%
The entries of the noise vector $w$ are drawn uniformly from a Gaussian with standard deviation $0.1$, we set $\la=0.28$ for $R=\norm{\cdot}_1$ and $\la=10$ for $R=\norm{\cdot}_*$. 

For each realization of $(\xorig,\Phi,w)$, we solve the associated problem \eqref{eq-variational-ip-intro-noisy} using CVX~\hl{\cite{cvx}} to get a high precision; denote $\xsol(\la,y)$ the obtained solutions. 
We also solve~\eqref{eq-variational-ip-intro-noisy} by the Forward-Backward (FB) scheme which reads in this case\footnote{\hl{The definition of the proximal mapping $\prox_{\tau R}$ (for $\tau > 0$) is given in~\eqref{eq:prox}.} The proximal mappings of the $\ell_1$ and nuclear norms are
\begin{align*}
	\prox_{\mu \norm{\cdot}_1}(x) = \pa{ \sign(x^i) \max(0,|x^i|-\mu) }_i, \;\;
	\prox_{\mu \norm{\cdot}_*}(x) = U \diag( \prox_{\mu \norm{\cdot}_1}(\si(x)) ) V^*, 
\end{align*}
where $x=U \diag(\si(x)) V^*$ is a SVD decomposition of $x$.}
\eq{\label{eq-fb-iter-intro}
	\Iter{x} = \prox_{\la\gamma R}(\iter{x}+\gamma \Phi^*(y - \Phi \iter{x})) .
}
In our setting, with  \hl{$\ga = 1.8 / \sigma_{\max}(\Phi^*\Phi)$} and non-emptiness of $\Argmin(E(\cdot,\la,y)$,  it is well-known that the sequence $(\iter{x})_{k \in \NN}$ converges to a point in $ \Argmin(E(\cdot,\la,y))$. 
%
%

The top row of Figure~\ref{fig-sample-paths} displays the histogram of the complexity index excess 
$\hl{\delta \eqdef R_0(\xsol(\la,y))-R_0(\xorig)}$
%
%
%
which clearly shows that we do not have exact stability under the perturbations $w$. Indeed, although $R_0(\xsol(\la,y))$ is still rather small, its value is in most cases \textit{larger} than the complexity index $R_0(\xorig)$.
The bottom row of Figure~\ref{fig-sample-paths} depicts the evolution with $k$ of the complexity index $R_0(\iter{x})$ for two random instances of $\xorig$ (in blue and red) corresponding to two different values of $\hl{\delta}$. One can observe that the iterates identify in finite time some low-dimensional active set with a complexity index strictly larger than the one associated to $\xorig$ (the red curves). 
We will provide a more detailed discussion of this phenomenon in Section~\ref{sec-numerics}.

\newcommand{\sidecap}[1]{ {\begin{sideways}\parbox{3.5cm}{\centering\small #1}\end{sideways}} }

\begin{figure}
\begin{tabular}{@{}c@{}c@{\hspace{5mm}}c@{}}
\sidecap{Histograms of $\de$} &
\includegraphics[width=.45\linewidth]{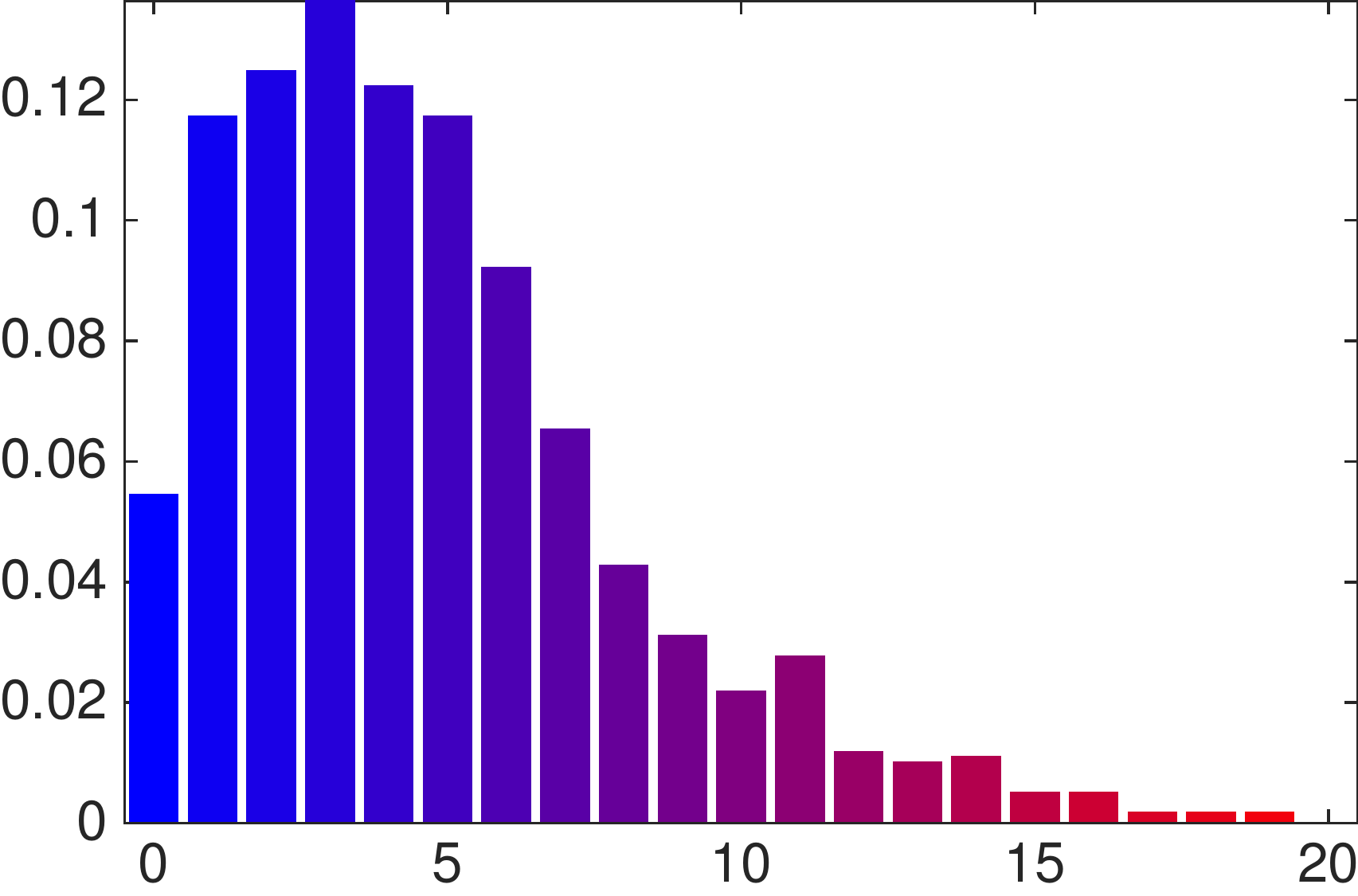} & 
\includegraphics[width=.45\linewidth]{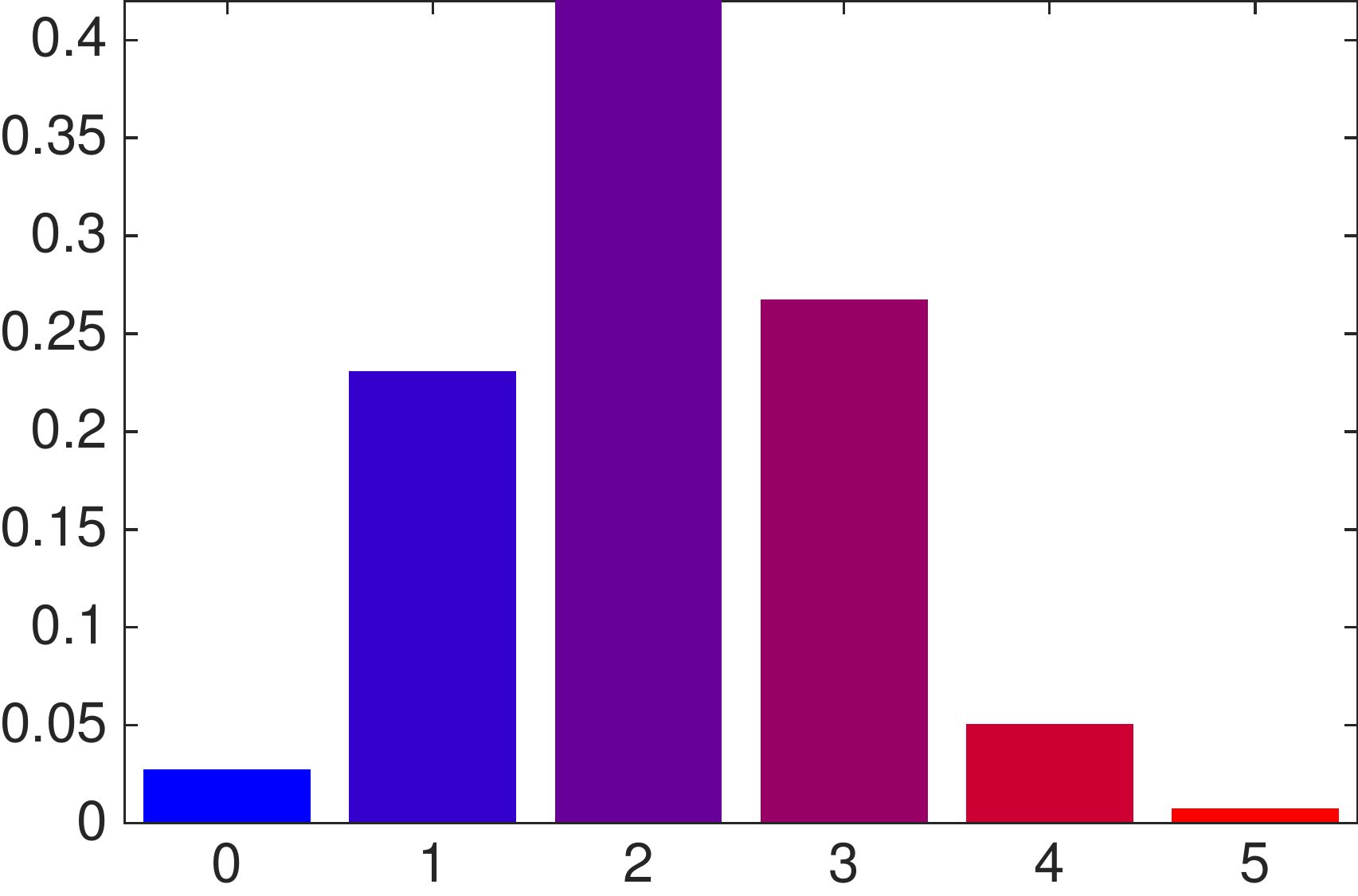} \\
\sidecap{$R_0(\iter{x})$ vs iteration $k$} &
\includegraphics[width=.45\linewidth]{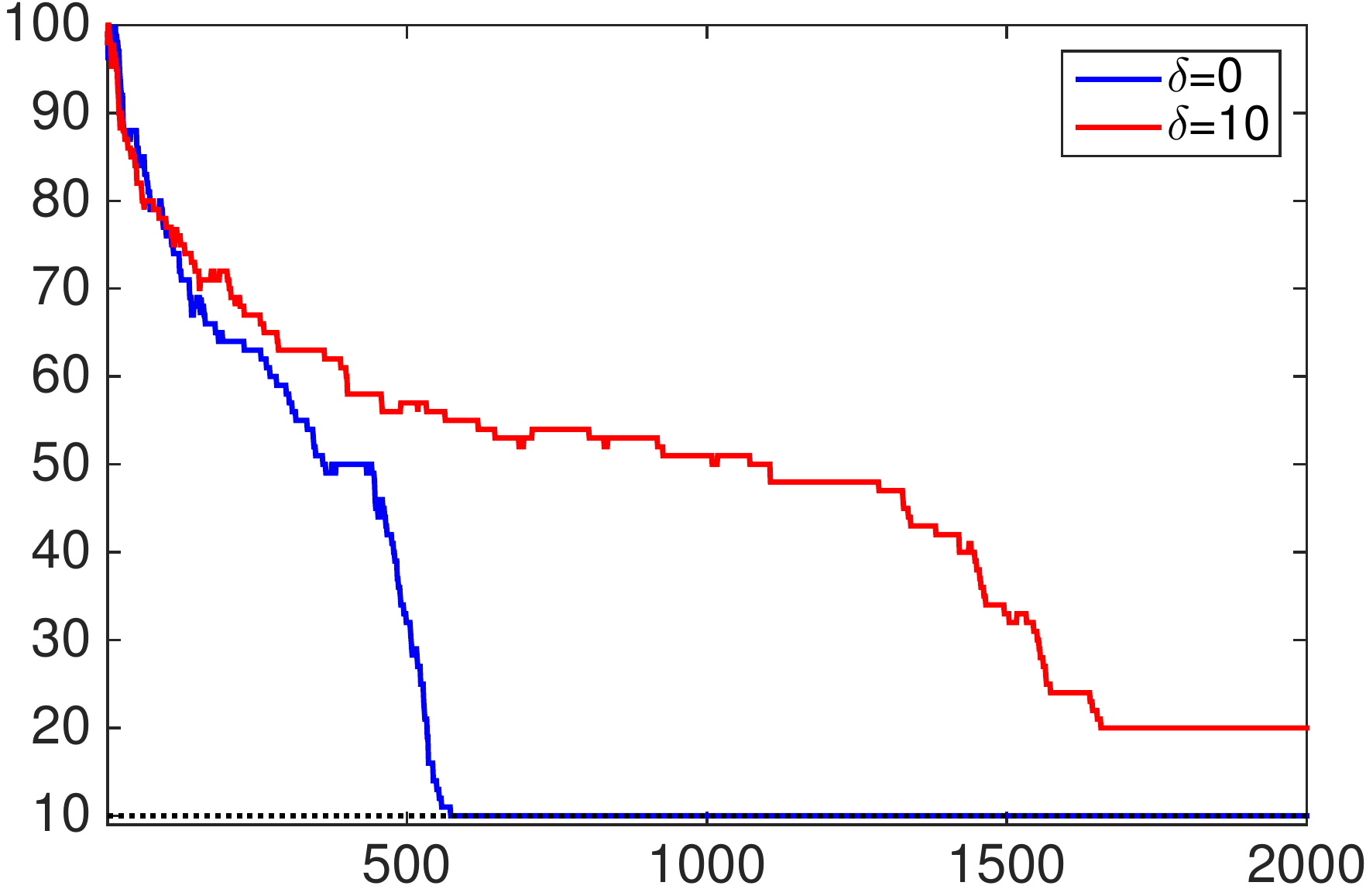} &
\includegraphics[width=.45\linewidth]{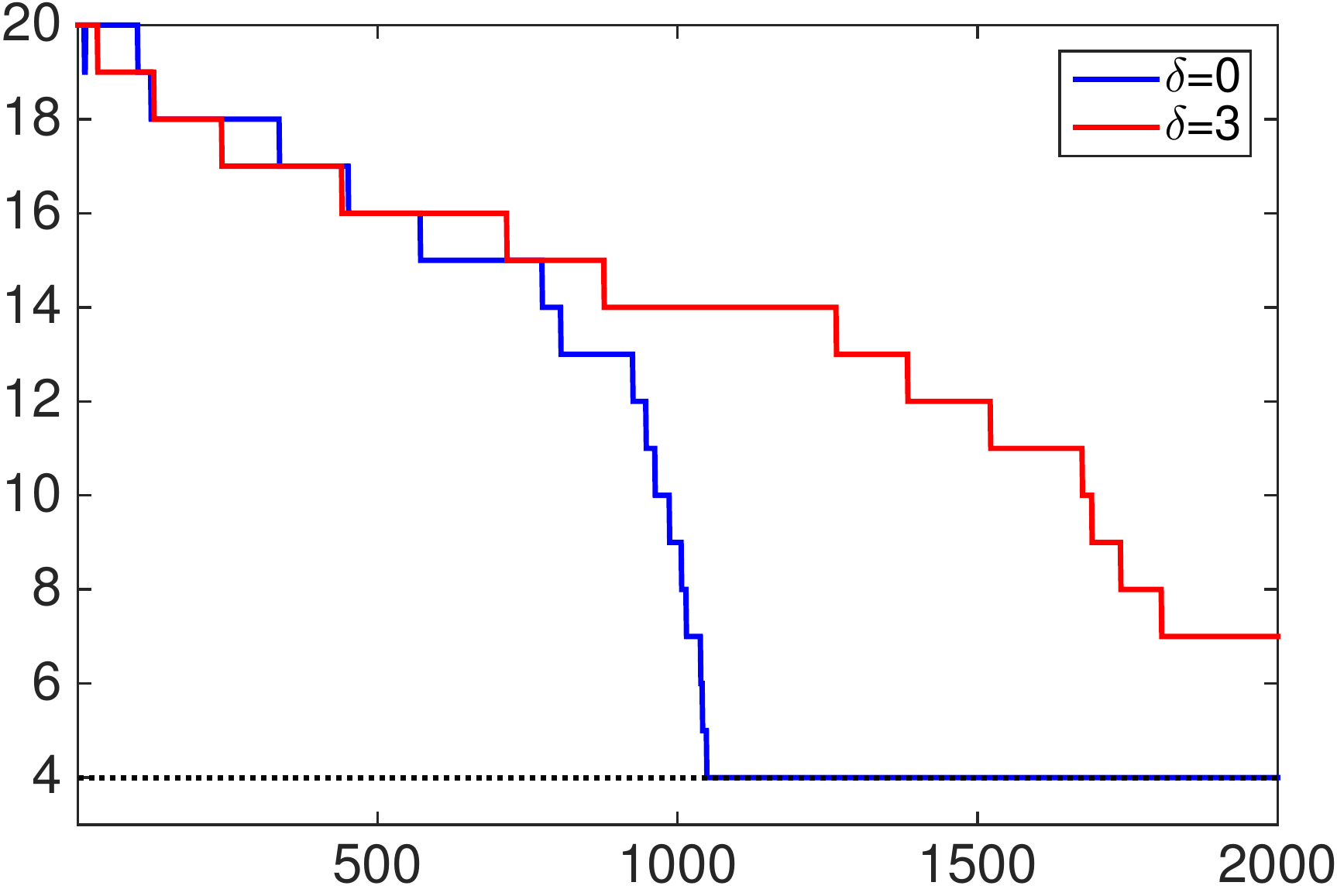}\\
& $R=\norm{\cdot}_1$ & $R=\norm{\cdot}_*$ 
\end{tabular}
\caption{\label{fig-sample-paths}
\textbf{Top row:} 
Histogram of the complexity index excess $\delta = R_0(\xsol(\la,y))-R_0(\xorig)$ where $\xsol(\la,y)$ is the solution of~\eqref{eq-variational-ip-intro-noisy}
with noisy observation $y=\yorig+w$ for a small perturbation $w$ and a well-chosen parameter value $\la = c_0 \norm{w}$.
\textbf{Bottom row:}
Evolution of the complexity index $R_0(\iter{x})$ where $(\iter{x})_{k \in \NN}$ is the FB iterates sequence converging to some $\xsol(\la,y) \in \Argmin(E(\cdot,\la,y))$.
}
\end{figure}

As we will emphasize in the review of previous work, existing results on sensitivity analysis and low-complexity regularization only focus the case where $R_0(\xsol(\la,y)) = R_0(\xorig)$ \hl{(i.e. $\delta=0$)}, whose underlying claim is that the low-complexity model is perfectly stable to perturbations, which typically requires some non-degeneracy assumption to hold.  
In many situations, however, including the compressed sensing scenario described above, but also in super-resolution imaging (see~\cite{2015-duval-thin-grids}) and other challenging inverse problems, non-degeneracy-type hypotheses are too restrictive. 
It is the goal of this paper to develop a more general sensitivity analysis, that goes beyond the non-degenerate case, to improve our understanding of stability to perturbations of low-complexity regularized inverse problems. 

\subsection{Contributions and Outline}

Our first contribution consists in introducing, in Section~\ref{sec-mirror-strat-func},~a class of proper lower-semicontinuous (lsc) convex functions that we coin ``mirror-stratifiable''. The subdifferential of such a function induces a primal-dual pairing of stratifications, which is pivotal to track identifiable strata. We discuss several examples of functions enjoying such a structure. 
%
With this structure at hand, we then turn to the main result of this paper, formalized in Theorem~\ref{thm:main}, and on which all the others rely. \hl{This result shows finite enlarged activity identification for a mirror-stratifiable function without the need of any non-degeneracy condition. In addition, the identifiable strata are precisely characterized in terms primal-dual optimal solutions.}
Sections~\ref{sec-sentivity-hybrid},~\ref{sec-ip-regul} and~\ref{sec-algorithm}~instantiate this abstract result for a set of concrete problems, respectively: sensitivity of composite \hl{"smooth+non-smooth"} optimization problems (Theorem~\ref{thm-stability-ps}), \hl{enlarged activity} identification by proximal splitting schemes such as Forward-Backward and Douglas-Rachford algorithms (Theorems~\ref{thm-FB}~and~\ref{thm-dr}), and finally, \hl{enlarged identification} for regularized inverse problems (Theorem~\ref{thm-ip}). Before stating these results, we make a short review of the associated literature.
Finally Section~\ref{sec-numerics} illustrates these theoretical findings with numerical experiments, in particular in a compressed sensing scenario involving the $\ell_1$ and nuclear norms as regularizers. 
Following the philosophy of reproducible research, all the code to reproduce the figures of this article is available
online\footnote{Code available at \texttt{https://github.com/gpeyre/2017-SIOPT-stratification}}.

\section{Mirror-Stratifiable Functions}
\label{sec-mirror-strat-func}

In this section, we introduce the class of mirror-stratifiable convex functions, and present the sensitivity property they provide (Theorem~\ref{thm:main}). We also illustrate that many popular regularizers used in data science are mirror-stratifiable. Most the results of this section (in particular all the examples) are easy to obtain by basic calculus; we \hl{therefore} do not give these proofs in the text and we gather some of them in Appendix~\ref{appendix}.

\subsection{Stratifications and definitions}

We start with recalling the following standard definition of stratification.


\begin{defn}[Stratification]\label{def:strat}
A stratification of a set $D \subset \RR^N$ is a finite partition $\Strat = \{M_i\}_{i\in I}$ such that for any partitioning sets (called strata) $\Man$ and $\Man'$ we have 
\eq{
	\Man \cap \cl(\Man') \neq \emptyset 
	\implies
	\Man \subset \cl(\Man').
}
If the strata are open polyhedra, then $\Strat$ is a polyhedral stratification, and if they are $C^2$-smooth manifolds then $\Strat$ entails a $C^2$-stratification.
\end{defn}

A stratification is naturally endowed with the partial ordering $\leq$ in the sense that
\eql{\label{eq-order-strata}
	\Man \leq \Man' 
	\quad\Longleftrightarrow\quad
	\Man \subset \cl(\Man')
	\quad\Longleftrightarrow\quad
	\Man \cap \cl(\Man') \neq \emptyset .
}
The relation is clearly reflexive and transitive.
Furthermore, we have
\eql{\label{eq-closure}
	\cl(\Man) = \bigcup_{\Man' \leq \Man} \Man'.
}
%



An immediate consequence of Definition~\ref{def:strat} is that for each point $x \in D$, there is a unique stratum containing $x$, denoted $\Man_x$. Indeed, suppose that there are two non-empty open strata $\Man_1$ and $\Man_2$ such that $\Man_1 \cap \Man_2 = \ens{x}$, and thus $\Man_1 \cap \Man_2 \neq \emptyset$. This implies, using \eqref{eq-order-strata}, that $\Man_1 \leq \Man_2$ and $\Man_2 \leq \Man_1$, and thus $\Man_1=\Man_2$.\\

At this stage, it is worth emphasizing that the strata are not needed to be manifolds in the rest of the paper. It is however the case that in many practical cases that we will discuss, strata are indeed manifolds, and sometimes affine manifolds.

\begin{exmp}[Polyhedral sets and functions]
A partition of a polyhedral set into its open faces induces a natural finite polyhedral stratification of it. In turn, let $R: \RR^N \to \RRb \eqdef \RR \cup \ens{+\infty}$ be a polyhedral function, and consider a polyhedral stratification of its epigraph, which is a polyhedral set in $\RR^{N+1}$. Projecting all polyhedral strata onto the first $N$-coordinates one obtains a finite polyhedral stratification of $\dom(R)$.
\end{exmp}

\begin{rem}
The previous example extends to semialgebraic sets and functions, which are known to induce stratifications into finite disjoint unions of manifolds. In fact, this holds for any tame class of sets/functions; see, e.g.,~\cite{coste1999omin}.
\end{rem}



We now single out a specific set of strata, called active strata, that will play a central role for \hl{finite enlarged activity identification} purposes.

\begin{defn}[Active strata]\label{eq-active-def}
Given a stratification $\Strat$ of $D \subset \RR^N$ and a point $x \in D$, a stratum $\Man \in \Strat$ is said active at $x$ if $x \in \cl(\Man)$. 
\end{defn}
\hl{
\begin{prop}\label{prop:strat}
Given a stratification $\Strat$ of $D$ and a point $x \in D$. Then there exists $\delta>0$ such that set of strata $\Man_{x'}$ for any $x'$ such that $\norm{x'-x} < \delta$, coincides with the set of strata $\Man\geq \Man_x$ and with the set of active strata at $x$.
\end{prop}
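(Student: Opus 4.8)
The plan is to split the claim into an order-theoretic identity (equality of the second and third sets) and a quantitative neighborhood statement (equality of the first set with the other two), both of which are elementary consequences of Definition~\ref{def:strat}.

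First I would record the equivalence between active strata and the upper set of $\Man_x$: a stratum $\Man$ is active at $x$, i.e. $x \in \cl(\Man)$, if and only if $\Man \geq \Man_x$. One direction is immediate from \eqref{eq-order-strata}, since $\Man_x \leq \Man$ means $\Man_x \subset \cl(\Man)$, and $x \in \Man_x$. For the converse, $x \in \cl(\Man)$ gives $x \in \Man_x \cap \cl(\Man)$, so $\Man_x \cap \cl(\Man) \neq \emptyset$, and the stratification axiom of Definition~\ref{def:strat} upgrades this to $\Man_x \subset \cl(\Man)$, that is $\Man_x \leq \Man$. In particular $\Man_x$ itself lies in all three sets, by reflexivity of $\leq$. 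This already identifies the set of strata $\Man \geq \Man_x$ with the set of active strata at $x$.

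Next I would choose the radius. Let $\delta \eqdef \min\{\, \inf_{z \in \cl(\Man)}\norm{x-z} \;:\; \Man \in \Strat,\ x \notin \cl(\Man)\,\}$, with the convention that this is $+\infty$ (so any positive $\delta$ works) when the index set is empty; since $\Strat$ is finite and each $\cl(\Man)$ is closed with $x \notin \cl(\Man)$, we get $\delta > 0$. For the inclusion ``$\subseteq$'': if $x' \in D$ with $\norm{x'-x} < \delta$, then $x'$ belongs to a unique stratum $\Man_{x'}$, and it cannot be one of those with $x \notin \cl(\Man)$, since otherwise $\inf_{z \in \cl(\Man_{x'})}\norm{x-z} \leq \norm{x'-x} < \delta$ would contradict the choice of $\delta$. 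Hence $x \in \cl(\Man_{x'})$, so by the first step $\Man_{x'} \geq \Man_x$. For the reverse inclusion, I would take any $\Man$ with $\Man \geq \Man_x$, so $x \in \cl(\Man)$ by the first step; then $\Man$ contains points arbitrarily close to $x$, so I can pick $x' \in \Man$ with $\norm{x'-x} < \delta$, and uniqueness of the stratum through $x'$ gives $\Man_{x'} = \Man$. Combining the two inclusions with the first step finishes the proof.

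I do not expect a genuine obstacle here: the argument is point-set topology together with the single axiom of Definition~\ref{def:strat}. The only genuinely structural ingredients are the equivalence ``active at $x$ $\Longleftrightarrow$ $\geq \Man_x$'' and, in the last step, the observation that to realize a stratum $\Man \geq \Man_x$ as some $\Man_{x'}$ one must approach $x$ from \emph{within} $\Man$ using $x \in \cl(\Man)$; the finiteness of $\Strat$ is precisely what makes a single uniform $\delta$ available.
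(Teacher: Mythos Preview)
Your proof is correct and follows essentially the same approach as the paper: both define $\delta$ as the minimum distance from $x$ to the closures of the non-active strata (using finiteness of $\Strat$), and both establish the equivalence ``active at $x$ $\Longleftrightarrow$ $\Man \geq \Man_x$'' via the stratification axiom. You present the two equivalences in the reverse order from the paper and are slightly more explicit about the inclusion that every active stratum is realized as some $\Man_{x'}$ (by picking $x'\in\Man$ close to $x$), a step the paper leaves implicit.
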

}
\subsection{Mirror-Stratifiable functions}

Following~\cite[Section~4]{daniilidis2013orthogonal}, we define the key correspondence operator whose role will become apparent shortly.

\begin{defn}
Let $R\colon\RR^N\to\RRb$ be a proper lsc convex function. The associated correspondence operator $\Jj_R: 2^{\RR^N} \to 2^{\RR^N}$ is defined as
\eq{
	\Jj_R(S) \eqdef \bigcup_{x \in S} \ri(\partial R(x)),
}
\end{defn}
where $\partial R$ is the subdifferential of $R$.


%
%
%
%
%
%
Observe that, by definition, $\Jj_R$ is increasing for set inclusion
\eql{\label{eq-J-inclusion}
	S \subset S' \qarrq \Jj_R(S) \subset \Jj_R(S').
}
For this operator to be useful in sensitivity analysis, we \hl{will} further impose that $\Jj_R$ is \textit{decreasing} for the partial ordering $\leq$ in~\eqref{eq-order-strata}, 
as captured in our main definition. \hl{This is a key requirement that captures the intuitive idea that the larger a primal stratum, the smaller its image by $\Jj_R$ in the dual space.} 

In the following, we will denote $R^*$ the Legendre-Fenchel conjugate of $R$.

\begin{defn}[Mirror-stratifiable functions]\label{def:mirstrat}
Let $R\colon\RR^N\to\RRb$ be a proper lsc convex function;
we say that $R$ is mirror-stratifiable with respect to  
a (primal) stratification $\Strat=\{\Man_i\}_{i\in I}$ of $\dom(\partial R)$ and a (dual) stratification $\Strat^*=\{\Man^*_i\}_{i\in I}$ of $\dom(\partial R^*)$ if the following holds:
\begin{enumerate}[label=(\roman*)]
\item Conjugation induces a duality pairing between $\Strat$ and $\Strat^*$, and $\Jj_R: \Strat \to \Strat^*$ is invertible with inverse $\Jj_{R^*}$, i.e. $\forall \Man \in \Strat$, $\Man^*\in\Strat^*$ we have
\eq{
\Man^* = \Jj_R(\Man) \iff \Jj_{R^*}(\Man^*) = \Man. 
}
\item $\Jj_R$ is decreasing for the relation $\leq$: for any $\Man$ and $\Man'$ in $\Strat$
\eq{
\Man \leq \Man' \iff \Jj_R(\Man) \geq \Jj_R(\Man'). 
}
\end{enumerate}
\end{defn}

\hl{The primal-dual} stratifications that make $R$ mirror-stratifiable are not unique, as will be exemplified in Remark~\ref{rem:nonunique}. Though the definition is formal and the assumptions looks restrictive, this class include many useful examples, as illustrated in the next section. We finish this section with a remark, used in the sequel.

\begin{rem}[Separability]\label{rem:sep}
\hl{For each $m=1,\ldots,L$, suppose that the proper lsc convex function $R_m: \RR^{N_m} \to \RRb$ is mirror-stratifiable with respect to stratifications $\Strat_m$ and $\Strat_m^*$. Then it is easy to show, using standard subdifferential and conjugacy calculus, that the function $R: (x_m)_{1 \leq m \leq L} \in \RR^{N_1} \times \cdots \times \RR^{N_L} \mapsto \sum_{m=1}^L R_m(x_m)$ is mirror-stratifiable with stratifications $\Strat_1 \times \cdots \times \Strat_L$ and $\Strat_1^* \times \cdots \times \Strat_L^*$.}
\end{rem}

\subsection{Examples}

The notion of a mirror-stratifiable function looks quite rigid. However, many of the regularization functions routinely used in data science are mirror-stratifiable. Let us provide some relevant examples in this section. In particular, the $\ell_1$-norm and the nuclear norm will be used in the numerical experiments.

\subsubsection{Legendre functions}
\hl{A lsc convex function $R: \RR^N \to \RRb$ is said to be a Legendre function (see \cite[Chapter~26]{rockafellar1970convex}) if (i) it is differentiable and strictly convex on the interior of its domain $\interop(\dom(R)) \neq \emptyset$, and (ii) $\norm{\nabla R(x_k)} \to +\infty$ for every sequence $\pa{x_k}_{k\in \NN} \subset \interop(\dom(R))$ converging to a boundary point of $\dom(R)$. Many functions in convex optimization are Legendre; most notably, quadratic functions and the log barrier of interior point methods. It was shown in \cite[Theorem~26.5]{rockafellar1970convex} that $R$ is Legendre if and only if its conjugate $R^*$ is Legendre, and that in this case $\nabla R$ is a bijection from $\interop(\dom(R))$ to $\interop(\dom(R^*))$ with $\nabla R^*=(\nabla R)^{-1}$.  As a consequence, a Legendre function $R$ is mirror-stratifiable with $\Strat = \ens{\interop(\dom(R))}$ and $\Strat^* = \ens{\interop(\dom(R^*))}$.}

\subsubsection{$\ell_1$-norm}
Let $R: x \in \RR \mapsto |x|$, whose conjugate $R^*=\ind_{[-1,1]}$. It follows that $R$ is mirror-stratifiable with $\Strat = \left\{\,]-\infty,0[,\{0\},]0,+\infty[\right\}$ and $\Strat^*= \{\{-1\}, ]-1,1[, \{+1\}\}$. Using Remark~\ref{rem:sep}, the next result is clear.

\begin{lem}\label{lem:stratl1}
The $\ell_1$-norm and its conjugate $\ind_{[-1,+1]^N}$ are mirror-stratifiable with respect to the stratifications $\Strat=\left\{\,]-\infty,0[,\{0\},]0,+\infty[\right\}^N$ of~$\RR^N$ and $\Strat^*=\{\{-1\}, ]-1,1[, \{+1\}\}^N$ of $[-1,+1]^N$.
\end{lem}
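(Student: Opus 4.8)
The plan is to reduce the statement to the one-dimensional case $R(x) = |x|$ on $\RR$, which is already established right before the lemma, and then invoke the separability result of Remark~\ref{rem:sep}. First I would recall that the $\ell_1$-norm decomposes coordinate-wise as $\norm{x}_1 = \sum_{i=1}^N |x^i|$, so that it is precisely a separable sum of $N$ copies of the scalar absolute value function $R_m = |\cdot|$ on $\RR^{N_m}$ with $N_m = 1$. Since each $R_m$ is mirror-stratifiable with respect to $\Strat_m = \{\,]-\infty,0[,\{0\},]0,+\infty[\,\}$ on $\RR$ and $\Strat_m^* = \{\{-1\},\,]-1,1[\,,\{+1\}\}$ on $[-1,1]$ — this being exactly the scalar computation carried out in the paragraph preceding the lemma, using $R^* = \ind_{[-1,1]}$ — Remark~\ref{rem:sep} immediately yields that $R = \sum_{i=1}^N R_m$ is mirror-stratifiable with respect to the product stratifications $\Strat_1 \times \cdots \times \Strat_N = \Strat_m^{\,N}$ and $\Strat_1^* \times \cdots \times \Strat_N^* = (\Strat_m^*)^N$, which are exactly the $\Strat$ and $\Strat^*$ in the statement.

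Next I would verify that the conjugate of $\norm{\cdot}_1$ is indeed $\ind_{[-1,1]^N}$, so that the second half of the lemma (the claim about the conjugate being mirror-stratifiable) is not an independent assertion but follows from the symmetry built into Definition~\ref{def:mirstrat}: if $R$ is mirror-stratifiable with respect to $(\Strat,\Strat^*)$ then $R^*$ is mirror-stratifiable with respect to $(\Strat^*,\Strat)$, because condition (i) is symmetric under swapping $R \leftrightarrow R^*$ and $\Jj_R \leftrightarrow \Jj_{R^*}$ (using $R^{**}=R$), and condition (ii) for $R$ is the same as condition (ii) for $R^*$ written with the roles of the two stratifications exchanged. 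One small point to check is that $\dom(\partial R) = \RR^N$ and $\dom(\partial R^*) = [-1,1]^N$, so that $\Strat$ and $\Strat^*$ are genuine stratifications of the correct sets as required by the definition; this is routine since $\norm{\cdot}_1$ is finite-valued and $\ind_{[-1,1]^N}$ has subdifferential domain equal to $[-1,1]^N$.

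The remaining ingredient is simply to confirm the scalar base case underlying Remark~\ref{rem:sep}'s application, namely that for $R(x)=|x|$ on $\RR$ the operator $\Jj_R$ sends $]-\infty,0[ \mapsto \{-1\}$, $\{0\} \mapsto \ri[-1,1] = \,]-1,1[$, and $]0,+\infty[ \mapsto \{+1\}$, with $\Jj_{R^*}$ being the reverse correspondence; this uses $\partial R(x) = \{\sign(x)\}$ for $x \neq 0$ and $\partial R(0) = [-1,1]$, together with the fact that the relative interior of a singleton is itself. The decreasing property (ii) is then immediate from the three-element order structure: $\{0\} \leq\,]0,+\infty[$ on the primal side corresponds to $]-1,1[\, \geq \{+1\}$ on the dual side, and likewise for the negative branch, while the two open half-lines are incomparable and so are their images. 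I do not anticipate a genuine obstacle here: the entire content is bookkeeping on the product of three-point posets, and the only thing that requires a line of justification is invoking Remark~\ref{rem:sep}, which the paper itself states is a straightforward consequence of standard subdifferential and conjugacy calculus for separable sums.
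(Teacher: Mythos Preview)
Your proposal is correct and follows exactly the paper's own argument: the paper establishes the scalar case $R(x)=|x|$ immediately before the lemma and then states that the result is clear from Remark~\ref{rem:sep} on separability. Your additional verifications (the conjugate, the domains, and the symmetry of Definition~\ref{def:mirstrat}) are routine and simply make explicit what the paper leaves implicit.
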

 
\hl{A graphical illustration of this lemma in dimension $2$ is displayed in Figure~\ref{fig:msl1}(a).} 
%

\begin{rem}[Non-uniqueness of stratifications]\label{rem:nonunique}
In general, we do not have uniqueness of stratifications inducing mirror-stratifiable functions. To illustrate this, let us consider the $\ell_1$ norm. Take the partitions $\Strat=\left\{\,]-\infty,0[ \cup ]0,+\infty[,\{0\}\right\}^N$ of~$\RR^N$ and $\Strat^*=\{\{-1,+1\}, ]-1,1[\}^N$. These are valid stratifications though the strata are not connected. Other possible valid stratifications are given by strata $\Man_j = \enscond{x\in \RR^N}{\norm{x}_0=j}$ and $\Man^*_j=\Jj_{\norm{\cdot}_1}(\Man_j)$, for $j=0,1,\ldots,N$.
%
It is immediate to check that the $\ell_1$-norm is also mirror-stratifiable with respect to these stratifications. However, as devised above, it is in general not wise to take such large strata as they lead to less sharp localization and sensitivity results.
%
\end{rem}

\begin{rem}[Instability under the sum rule]\label{rem:sumrule}
The family of mirror-stratifiable functions is unfortunately not stable under the sum. As a simple counter-example, consider the pair of conjugate functions on $\RR$: $R(x)= |x| + x^2/2$ and $R^*(u)=(\max\{|u|-1,0\})^2/2$. We obviously have $\dom(\partial R) =\dom(\partial R^*)=\RR$. However we observe that $\Jj_{R}(\RR) = \RR\setminus \{-1,1\}$. This yields that $\Jj_R$ cannot be a pairing between any two stratifications of $\RR$, and therefore $R$ cannot be mirror-stratifiable. 
\end{rem}

\begin{figure}
\centering
\begin{tabular}{@{}c@{}}
\includegraphics[width=0.67\linewidth]{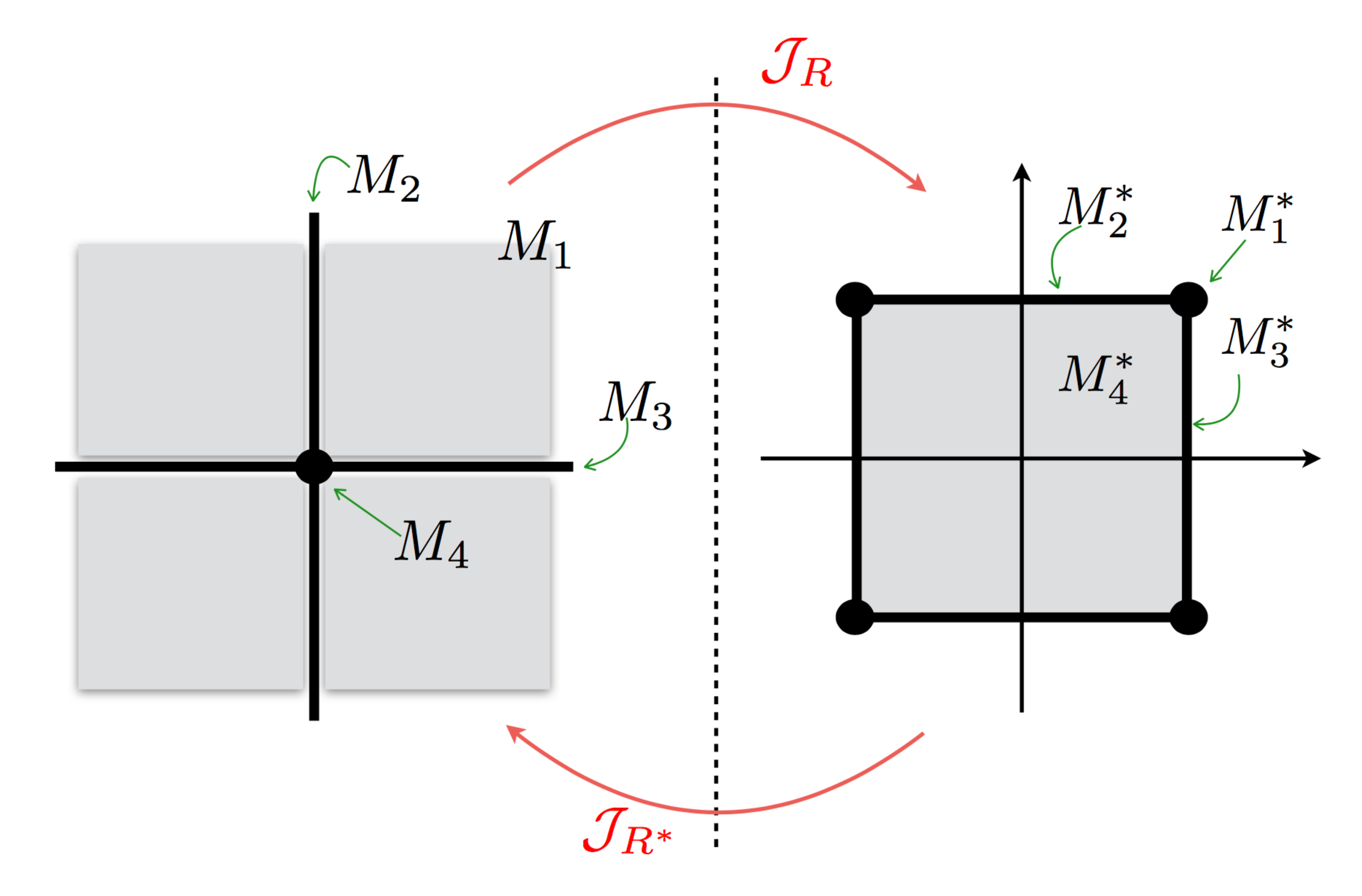} \\
(a) \\
\includegraphics[width=0.8\linewidth]{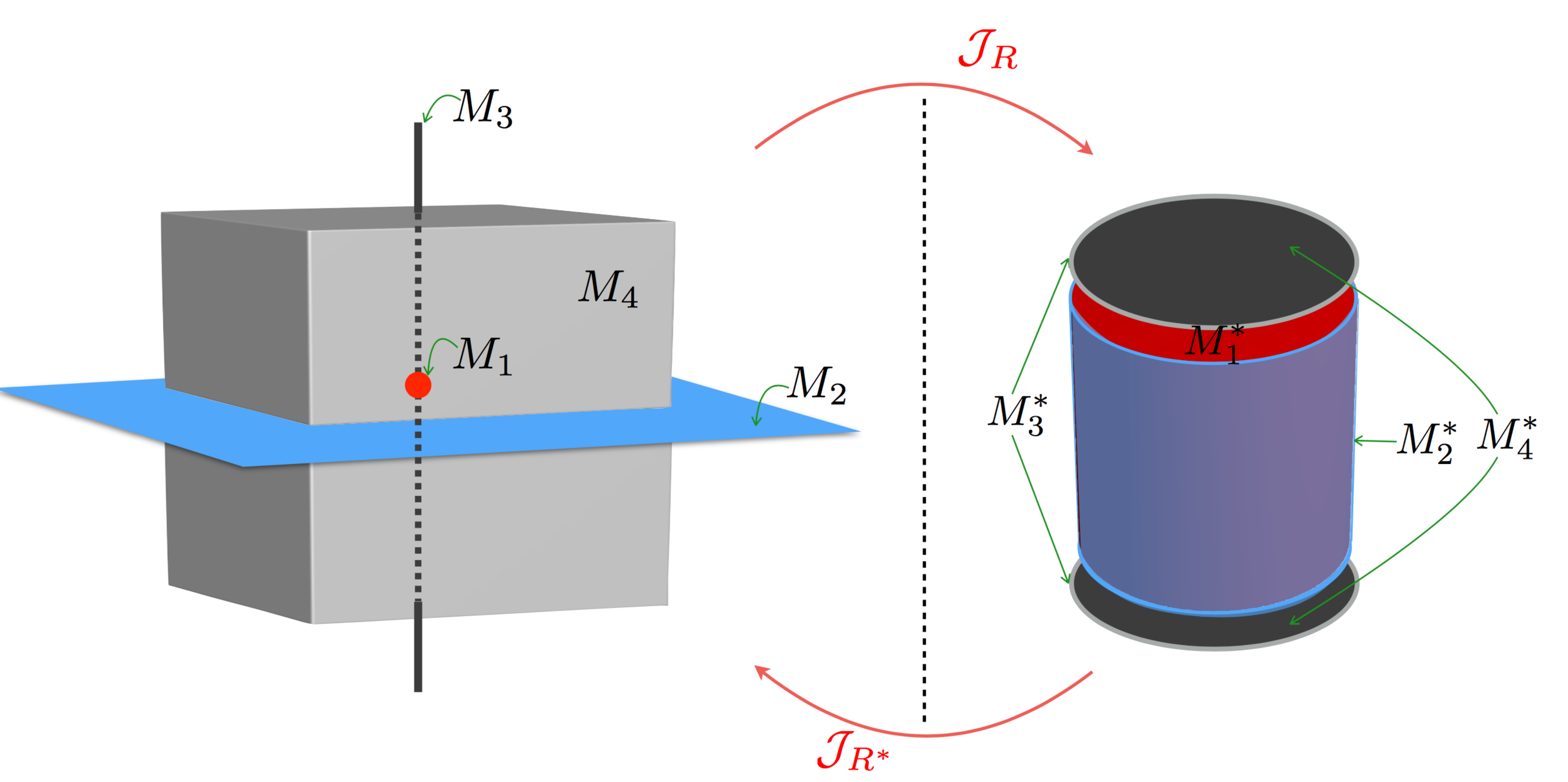}\hspace*{1.4cm} \\
(b)
\end{tabular}
\caption{
\hl{Graphical illustration of mirror-stratification for two norms: (a) $\ell_1$ norm in dimension $2$; (b) $\ell_{1,2}$ norm in dimension $3$ (with two blocks, of respectively sizes $1$ and $2$).}
}
\label{fig:msl1}
\end{figure}

\subsubsection{$\ell_{1,2}$ norm}\label{sec:l12}

The $\ell_{1,2}$ norm, also known as the group Lasso regularization, has been advocated to promote group/block sparsity~\cite{yuan2005model}, i.e. it drives all the coefficients in one group to zero together. Let $\Bb$ a non-overlapping uniform partition of $\{1,\ldots,N\}$ into $K$ blocks. The $\ell_{1,2}$ norm induced by the partition $\Bb$ reads
\[
\norm{x}_{\Bb} = \sum_{B\in \Bb} \norm{x^B}_2
\]
where $x^B$ is the restriction of $x$ to the entries indexed by the block $B$. This is again a separable function of the $x^B$'s. One can easily show that the $\ell_2$ norm on $\RR^{|B|}$ and its conjugate, the indicator of the unit $\ell_2$-ball $\Ball_{\ell_2^{|B|}}$ in $\RR^{|B|}$, are mirror-stratifiable with respect to the stratifications $\ens{\ens{0},\RR^{|B|} \setminus \ens{0}}$ of $\RR^N$ and $\ens{\SS_{\ell_2}^{|B|-1},\interop(\Ball_{\ell_2^{|B|}})}$ of $\Ball_{\ell_2^{|B|}})$, where $\SS_{\ell_2}^{|B|-1}$ is the corresponding unit sphere. In turn, the $\ell_{1,2}$ norm is mirror-stratifiable with respect to the stratifications $\Strat=\ens{\ens{0},\RR^{|B|} \setminus \ens{0}}^K$ and $\Strat^*=\ens{\SS^{|B|-1},\interop(\Ball_{\ell_2^{|B|}})}^K$. \hl{An illustration of this result in dimension $3$ is portrayed in Figure~\ref{fig:msl1}(b).}

%

\subsubsection{Nuclear norm}\label{subsubsec:stratnuc}

Let us come back to the nuclear norm defined in Example~\ref{ex:nuc}. For simplicity, we assume $n_1=n_2=n$. Let $\Man$ be a stratum of the $\ell_1$ norm stratification. In the following, we denote by $\Man^{\sym}$ its symmetrization. We observe that $\si^{-1}(\Man^{\sym}) = \enscond{X \in \RR^{n \times n}}{ \rank(X) = \norm{z}_0, z \in \Man}$. So many inverse images $\si^{-1}(\Man^{\sym})$ of strata $\Man^{\sym}$ coincide. This suggests the following $n+1$ stratifications: for a given $i\in \{0,\ldots,N\}$
\begin{eqnarray*}
\Man_i &=& \enscond{X \in \RR^{n \times n}}{ \rank(X) = i }\\
\Man^*_i &=& \enscond{U \in \RR^n}{\si_1(U) = \cdots = \si_i(U) = 1, \forall j>i,\ |\si_j(U)|<1 }.
\end{eqnarray*}
This yields that $\norm{\cdot}_*$ is mirror-stratifiable with respect to these stratifications.

%

\subsubsection{Polyhedral Functions}
We here establish mirror-stratifiability of polyhedral functions,
including the $\ell_1$ norm, the $\ell_{\infty}$ norm and anisotropic TV semi-norm. 

A polyhedral function $R \colon \RR^N\to\RRb$ can be expressed as
\begin{equation}\label{eq:poly}
R(x) = \max_{i=1,\ldots,k} \{\dotp{a_i}{x} -\al_i \}
+ \ind_{\bigcap_{i=k+1,\ldots,m} \enscond{x }{ \dotp{a_i}{x} -\al_i \leq 0} }(x).
\end{equation}
For any $x\in \dom(R)$, introduce the two sets of indices
\begin{eqnarray*}
\Imax(x)&=& \enscond{i=1,\ldots,m}{ \dotp{a_i}{x}-\al_i =R(x) }, \\
\Ifeas(x)&=& \enscond{i=1,\ldots,m}{ \dotp{a_i}{x}=\al_i }.
\end{eqnarray*}
For a given index set $I\subset\{1,\ldots,m\}$, we consider the affine manifold
\eql{\label{eq-MI-poly}
\Man_I \eqdef\enscond{x\in \dom(R)}{ \Imax(x)\cap\Ifeas(x) = I }.
}
We see that some $\Man_I$ may be empty and that $\Strat = \ens{\Man_I}_I$ is a stratification of $\dom(R)$.
The stratum $\Man_I$ is characterized by the optimality part $\Imax=I\cap \{1,\ldots,k\}$ and the feasibility part $\Ifeas=I\cap \{k+1,\ldots,m\}$ of $I$. Similarly we define
\[
\Man^*_I
= \ri(\conv\enscond{a_i}{i\in \Imax}) + \ri(\cone\enscond{a_i}{i\in \Ifeas}).
\]
Let us formalize in the next proposition a result alluded to in \cite{daniilidis2013orthogonal}.

\begin{prop}\label{prop-polyhedral}
A polyhedral function $R$ is mirror-stratifiable with respect to its naturally induced stratifications $\{\Man_I\}_{I}$ and $\{\Man^*_I\}_{I}$.
\end{prop}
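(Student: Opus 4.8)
I would verify the two clauses of Definition~\ref{def:mirstrat} for the pair $\Strat=\{\Man_I\}_I$, $\Strat^*=\{\Man^*_I\}_I$, after recording the elementary subdifferential calculus that drives everything. By the exact sum rule for polyhedral functions (no qualification is needed, the max-part having full domain) together with the classical formulas for the subdifferential of a finite max of affine functions and for the normal cone of a polyhedron, one gets, for every $x\in\dom(\partial R)=\dom(R)$ lying in $\Man_I$,
\[ \partial R(x)=\conv\{a_i:i\in\Imax\}+\cone\{a_i:i\in\Ifeas\}=:D_I . \]
Thus $\partial R$ is constant on each $\Man_I$, and since $\ri(A+B)=\ri(A)+\ri(B)$ for convex sets we have $\ri D_I=\Man^*_I$, whence
\[ \Jj_R(\Man_I)=\bigcup_{x\in\Man_I}\ri(\partial R(x))=\ri D_I=\Man^*_I ; \]
in particular $\Jj_R$ already maps $\Strat$ into $\Strat^*$.

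\textbf{The crux.} The heart of the proof is the identity $\partial R^*(u)=\cl(\Man_I)$ for every $u\in\Man^*_I$. The inclusion ``$\supseteq$'' is immediate: for $x\in\Man_I$ one has $u\in\ri D_I\subset D_I=\partial R(x)$, so $x\in\partial R^*(u)$, and $\partial R^*(u)$ is closed. For ``$\subseteq$'' I would exploit the strictly-positive representation afforded by $u\in\ri D_I$, namely $u=\sum_{i\in\Imax}\lambda_i a_i+\sum_{i\in\Ifeas}\mu_i a_i$ with all $\lambda_i>0$, $\sum_i\lambda_i=1$, all $\mu_i>0$. Evaluating $\dotp{x}{u}$ for any $x\in\Man_I$ (nonempty) via the defining equalities of $\Man_I$ together with the Fenchel equality $R(x)+R^*(u)=\dotp{x}{u}$ pins down $R^*(u)=\sum_{i\in\Imax}\lambda_i\al_i+\sum_{i\in\Ifeas}\mu_i\al_i$. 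Then, for an arbitrary $x'\in\partial R^*(u)$ --- that is, $x'\in\dom(R)$ with $R(x')+R^*(u)=\dotp{x'}{u}$ --- expanding $\dotp{x'}{u}$ by the same representation and bounding each term using the inequalities valid on $\dom(R)$ ($\dotp{a_i}{x'}-\al_i\leq R(x')$ for $i\leq k$, $\dotp{a_i}{x'}\leq\al_i$ for $i>k$) yields $\dotp{x'}{u}\leq R(x')+R^*(u)$; equality, which holds, forces every inequality used to be tight, and since each $\lambda_i$ and $\mu_i$ is strictly positive this means $i\in\Imax(x')$ for all $i\in\Imax$ and $i\in\Ifeas(x')$ for all $i\in\Ifeas$. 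Hence $\partial R^*(u)$ coincides with the polyhedron of points of $\dom(R)$ whose active index set contains $I$, which --- this being part of the elementary verification that $\{\Man_I\}$ is a stratification --- is precisely $\cl(\Man_I)$, of which $\Man_I$ is the relative interior.

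\textbf{Assembling the two clauses.} First, $R^*$ is again polyhedral by Fenchel duality, and the identity above shows $\partial R^*$ is constant on each $\Man^*_I$; consequently distinct nonempty primal strata give distinct dual strata, the $\Man^*_I$ are pairwise disjoint relatively open polyhedra, and they cover $\dom(\partial R^*)$, the range of $\partial R$. That $\Strat^*$ is then a genuine stratification I would obtain either by a symmetric reprise of the argument (using $R^{**}=R$) or by matching $\{\Man^*_I\}$ against the natural stratification obtained by applying the present construction to $R^*$; this is where the correspondence already signalled in \cite{daniilidis2013orthogonal} enters. Applying the identity once more, $\Jj_{R^*}(\Man^*_I)=\bigcup_{u\in\Man^*_I}\ri(\partial R^*(u))=\ri(\cl(\Man_I))=\Man_I$, so $\Jj_R$ and $\Jj_{R^*}$ are mutually inverse bijections between $\Strat$ and $\Strat^*$ --- clause~(i). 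For clause~(ii), note $\cl(\Man^*_I)=\cl(\ri D_I)=D_I$, so $\Man^*_A\leq\Man^*_B\iff\ri D_A\subset D_B$: here $A\subset B\Rightarrow D_A\subset D_B$ is read off the generating sets, and conversely $\ri D_A\subset D_B=\partial R(y)$ for $y\in\Man_B$ forces $y\in\partial R^*(u)=\cl(\Man_A)$ for any $u\in\Man^*_A$, hence $\Man_B\leq\Man_A$, i.e.\ $A\subset B$. Combined with $\cl(\Man_J)$ being the polyhedron of points whose active index set contains $J$ (so that $\Man_I\leq\Man_J\iff J\subset I$), this yields $\Man_I\leq\Man_J\iff\Man^*_J\leq\Man^*_I\iff\Jj_R(\Man_I)\geq\Jj_R(\Man_J)$, which is exactly~(ii).

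\textbf{Where the difficulty sits.} The only genuinely non-routine point is the ``$\subseteq$'' inclusion in the central identity, and within it the bookkeeping with the relative-interior representation of a point of $D_I$ when the generators $a_i$ are affinely dependent, together with the handling of empty or redundant index sets $I$ so that the asserted bijection is really between the families of nonempty strata. Everything else is standard polyhedral convex analysis plus the order-theoretic dictionary ``index-set inclusion $\leftrightarrow$ face inclusion''; the single thing I would take care not to postulate is that the frontier condition for $\Strat^*$ is genuinely inherited.
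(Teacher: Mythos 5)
Your proof is correct and follows essentially the same route as the paper's: the explicit formula $\partial R(x)=\conv\{a_i:i\in\Imax\}+\cone\{a_i:i\in\Ifeas\}$, the identification $\Jj_R(\Man_I)=\Man^*_I$, the characterization of $\partial R^*(u)$ for $u\in\Man^*_I$ as the closed face of points whose active index set contains $I$, and the order-reversal via ``index-set inclusion $\leftrightarrow$ face inclusion''. The only difference is that you prove the key identity $\partial R^*(u)=\cl(\Man_I)$ by an explicit Fenchel-equality/complementary-slackness computation, whereas the paper obtains the same fact more tersely from $\partial R^*(u)=\{x:u\in\partial R(x)\}$ and the unique decomposition of a polyhedron.
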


\begin{rem}[Back to $\norm{\cdot}_1$]
As we anticipated, Proposition~\ref{prop-polyhedral} subsumes Lemma~\ref{lem:stratl1} as a special case. To see this, observe that
\[
\norm{x}_1=\max_{\norm{u}_{\infty}\leq 1} \dotp{u}{x} = \max_{u\in \{-1,+1\}^N} \dotp{u}{x} ,
\]
which is of the form~\eqref{eq:poly} with $k=m=2^N$. Thus there are $2^{2^N}$ affine manifolds as defined by \eqref{eq-MI-poly}. But many of them are empty: there is only $3^N$ (non-empty, distinct) manifolds in the stratification of $\RR^N$, and they coincide with those in Lemma~\ref{lem:stratl1}.
\end{rem}

\subsubsection{Spectral Lifting of Polyhedral Functions}

As we discussed in Remark~\ref{rem:sumrule}, $\Jj_R$ may fail to induce a duality pairing between stratifications of $R$ and $R^*$, in which case $R$ cannot be mirror-stratifiable. Hence, for this type of duality to hold, one needs to impose stringent strict convexity conditions. To avoid this, and still afford a large class of mirror-stratifiable functions that are of utmost in applications, we consider spectral lifting of polyhedral functions, in the same vein as \cite{daniilidis2013orthogonal} did it for partial smoothness.

A matrix function $R\colon\RR^{N=n\times n}\to\RRb$ is said to be a spectral lift of a polyhedral function if there exists a polyhedral function $R^{\sym}\colon\RR^n\to\RR$, invariant under signed permutation of its coordinates, such that $R =  R^{\sym} \circ \si$ where $\si$ computes the singular values of a matrix.
Associated to $\Strat=\{\Man_I\}_I$ the (polyhedral) stratification induced by $R^{\sym}$, we consider its symmetrized stratification. We define 
the symmetrization of $\Man\in \Strat$, as the set $\Man^{\sym}$
\[
\Man^{\sym} =
\enscond{x\in \RR^N}{\exists y \in \Man \text{ such\;that\;for\;all\;$i$\;there\;exists\;$j$\;with\;} |x^i|= |y^j|} .
\]
The next result is a corollary of the main result of \cite{daniilidis2013orthogonal}.

\begin{prop}\label{prop:stratspectral}
A spectral function $R = R^{\sym}\circ \si$ is mirror-stratifiable with respect to the smooth stratification $\{\si^{-1}(\Man^{\sym})\}$ and its image by $\Jj_R$.
\end{prop}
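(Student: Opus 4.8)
\textbf{Proof plan for Proposition~\ref{prop:stratspectral}.}
The plan is to reduce everything to the polyhedral case (Proposition~\ref{prop-polyhedral}) by transport along the singular value map $\si$, using the orthogonal-invariance machinery of \cite{daniilidis2013orthogonal}. First I would recall from \cite{daniilidis2013orthogonal} the precise correspondence between the subdifferential of a spectral lift $R = R^{\sym}\circ\si$ and the subdifferential of $R^{\sym}$: at a matrix $x = U\diag(\si(x))V^\ast$ one has $\partial R(x) = \enscond{U\diag(d)V^\ast}{d \in \partial R^{\sym}(\si(x)),\ (U,V) \text{ compatible with } x}$, and this identity passes to relative interiors, so that $\ri(\partial R(x)) = \enscond{U\diag(d)V^\ast}{d \in \ri(\partial R^{\sym}(\si(x))),\ (U,V) \text{ compatible}}$. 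Consequently $\Jj_R(\si^{-1}(\Man^{\sym})) = \si^{-1}\bigl(\Jj_{R^{\sym}}(\Man)^{\sym}\bigr)$, i.e. the correspondence operator intertwines with $\si$ and with symmetrization of strata. Since $R^\ast = (R^{\sym})^\ast\circ\si$ with $(R^{\sym})^\ast$ again polyhedral and signed-permutation invariant (this is the standard conjugacy fact for spectral functions), the same intertwining holds for $\Jj_{R^\ast}$.

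Next I would verify that $\Strat^{\sym} \eqdef \{\si^{-1}(\Man^{\sym})\}_{\Man\in\Strat}$ is indeed a stratification of $\dom(\partial R)$, and similarly on the dual side. The partition property follows because distinct strata $\Man$ of the polyhedral stratification of $\dom(R^{\sym})$ that are not related by signed permutation give disjoint inverse images under $\si$ (this is exactly why the strata in Section~\ref{subsubsec:stratnuc} were indexed by rank rather than by support), and the closure/frontier condition $\Man\cap\cl(\Man')\neq\emptyset \implies \Man\subset\cl(\Man')$ transfers from the polyhedral stratification because $\si$ is continuous and proper and $\cl(\si^{-1}(\Man^{\sym})) = \si^{-1}(\cl(\Man^{\sym}))$ — here one uses that the map taking a matrix to its (ordered) vector of singular values, together with the symmetrization, respects closures of the relevant semialgebraic strata. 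That these strata are smooth manifolds is precisely the content invoked from \cite{daniilidis2013orthogonal} (they are the orbits of a group action / level sets of the rank-type partition), so smoothness is cited rather than reproved.

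It then remains to check the two defining conditions of Definition~\ref{def:mirstrat}. For (i), the duality pairing: by Proposition~\ref{prop-polyhedral}, $\Jj_{R^{\sym}}: \Strat\to\Strat^\ast$ is a bijection with inverse $\Jj_{(R^{\sym})^\ast}$; applying $\si^{-1}(\,\cdot^{\sym})$ to both sides and using the intertwining identities from the first paragraph shows $\Jj_R: \Strat^{\sym}\to (\Strat^\ast)^{\sym}$ is a bijection with inverse $\Jj_{R^\ast}$. One must be slightly careful that the symmetrization-then-preimage operation $\Man\mapsto\si^{-1}(\Man^{\sym})$ is itself injective on $\Strat$ up to the signed-permutation action, and that $R^{\sym}$'s own stratification can be chosen (as in Remark~\ref{rem:nonunique}) to be signed-permutation invariant, so that no collapsing occurs beyond the intended one; this is where I would lean on the explicit construction of \cite{daniilidis2013orthogonal}. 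For (ii), the order-reversing property: $\Man_1 \leq \Man_2$ in $\Strat$ iff $\Man_1^{\sym}\subset\cl(\Man_2^{\sym})$ iff $\si^{-1}(\Man_1^{\sym})\subset\cl(\si^{-1}(\Man_2^{\sym}))$ (again by continuity/properness of $\si$), i.e. iff $\si^{-1}(\Man_1^{\sym}) \leq \si^{-1}(\Man_2^{\sym})$ in $\Strat^{\sym}$; combining with the already-established intertwining $\Jj_R\circ\si^{-1}(\cdot^{\sym}) = \si^{-1}((\cdot)^{\sym})\circ\Jj_{R^{\sym}}$ and the order-reversal of $\Jj_{R^{\sym}}$ from Proposition~\ref{prop-polyhedral} gives the desired equivalence $\Man_1\leq\Man_2 \iff \Jj_R(\si^{-1}(\Man_1^{\sym})) \geq \Jj_R(\si^{-1}(\Man_2^{\sym}))$.

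The main obstacle I anticipate is not any single computation but the bookkeeping around symmetrization: one must be sure that $\cl$ and $\ri$ genuinely commute with the operations $\Man\mapsto\Man^{\sym}$ and $S\mapsto\si^{-1}(S)$ on the specific strata at hand, and that passing from $R^{\sym}$ to $R$ does not create or destroy pairings between strata in an uncontrolled way. These are exactly the delicate points that the cited paper \cite{daniilidis2013orthogonal} handles in its analysis of orthogonally invariant (partly smooth) functions, so the cleanest route is to invoke its structural results on $\partial(R^{\sym}\circ\si)$ and on the geometry of the symmetrized strata, and then let the mirror-stratifiability of $R^{\sym}$ (Proposition~\ref{prop-polyhedral}) do the rest.
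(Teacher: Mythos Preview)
Your plan is correct and matches the paper's approach: the paper's proof is a terse version of exactly what you describe, citing Theorem~4.6(i) of \cite{daniilidis2013orthogonal} for the fact that $\{\si^{-1}(\Man^{\sym})\}$ is a smooth stratification, Theorem~4.6(iv) for the intertwining identity $\Jj_{R}(\si^{-1}(\Man^{\sym})) = \si^{-1}(\Jj_{R^{\sym}}(\Man^{\sym}))$, and continuity of $\si$ to transfer the order relation, with the polyhedral case (Proposition~\ref{prop-polyhedral}) doing the rest. Your more detailed bookkeeping around symmetrization, closures, and relative interiors is precisely the content that the paper delegates wholesale to \cite{daniilidis2013orthogonal}.
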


\begin{rem}[Back to $\norm{\cdot}_*$]
The nuclear norm is a spectral lift of the $\ell_1$ norm. Therefore, one can recover mirror-stratification of the nuclear norm, with the stratifications given in Section~\ref{subsubsec:stratnuc}, by putting together Lemma~\ref{lem:stratl1} and Proposition~\ref{prop:stratspectral}.
\end{rem}

\subsection{\hl{Activity} Identification for Mirror-Stratifiable Functions}

To our point of view, the notion of mirror-stratifibility deserves a special study in view of the following simple but powerful geometrical observation. We state it as a theorem because of its utmost importance in the subsequent developments of this paper. 

\begin{thm}[\hl{Enlarged activity} identification]\label{thm:main}
Let $R$ be a proper lsc convex function which is mirror-stratifiable with respect to primal-dual stratifications $\Strat=\{\Man\}$ and $\Strat^*=\{\Man^*\}$. Consider a pair of points $(\xlim,\ulim)$ and the associated strata 
$\Man_{\xlim}$ and $\Man^*_{\ulim}$.
If the sequence pair $(x_k,u_k) \to (\xlim,\ulim)$ is such that $u_k \in \partial R(x_k)$, 
then for $k$ large enough, $x_k$ is localized in a specific set of strata such that 
\begin{equation}\label{eq-ident}
\Man_{\xlim} \leq \Man_{x_k} \leq \Jj_{R^*}(\Man_{\ulim}^*) .
\end{equation}
\vspace*{-3ex}
\end{thm}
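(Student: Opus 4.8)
The plan is to prove the two inequalities in~\eqref{eq-ident} separately: the left one is a direct consequence of Proposition~\ref{prop:strat}, while the right one is obtained by a short order-chase through the correspondence operator $\Jj_R$ and its inverse $\Jj_{R^*}$, carried out on the dual side. As a preliminary, I would record that since $u_k\in\partial R(x_k)$ and $(x_k,u_k)\to(\xlim,\ulim)$, closedness of the graph of $\partial R$ gives $\ulim\in\partial R(\xlim)$; equivalently $\xlim\in\dom(\partial R)$ and $\ulim\in\dom(\partial R^*)$, and also $x_k\in\dom(\partial R)$, $u_k\in\dom(\partial R^*)$ for every $k$, so all the strata $\Man_{\xlim}$, $\Man_{x_k}$, $\Man^*_{\ulim}$, $\Man^*_{u_k}$ are well defined.

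For the left inequality, I would simply apply Proposition~\ref{prop:strat} to the primal stratification $\Strat$ at the point $\xlim$: it provides $\delta>0$ such that $\Man_{x'}\geq\Man_{\xlim}$ whenever $\norm{x'-\xlim}<\delta$, and since $x_k\to\xlim$ this yields $\Man_{\xlim}\leq\Man_{x_k}$ for all $k$ large enough.

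For the right inequality, I would work in the dual space. Since $\partial R(x_k)$ is a nonempty closed convex set, it coincides with the closure of its relative interior, and by the very definition of the correspondence operator $\ri(\partial R(x_k))\subset\Jj_R(\Man_{x_k})$ (recall $x_k\in\Man_{x_k}$). By mirror-stratifiability, $\Jj_R(\Man_{x_k})$ is a single stratum of $\Strat^*$, and from $u_k\in\partial R(x_k)=\cl(\ri\,\partial R(x_k))\subset\cl(\Jj_R(\Man_{x_k}))$ together with the order characterization~\eqref{eq-order-strata} we deduce $\Man^*_{u_k}\leq\Jj_R(\Man_{x_k})$. On the other hand, applying Proposition~\ref{prop:strat} to the dual stratification $\Strat^*$ at $\ulim$ and using $u_k\to\ulim$ gives $\Man^*_{\ulim}\leq\Man^*_{u_k}$ for $k$ large, hence $\Man^*_{\ulim}\leq\Jj_R(\Man_{x_k})$. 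Finally, by Definition~\ref{def:mirstrat} the map $\Jj_R\colon\Strat\to\Strat^*$ is an order-reversing bijection whose inverse is $\Jj_{R^*}$, so $\Jj_{R^*}$ is order-reversing as well; applying it to the last inequality yields $\Man_{x_k}=\Jj_{R^*}(\Jj_R(\Man_{x_k}))\leq\Jj_{R^*}(\Man^*_{\ulim})$, which is the desired bound. The only delicate points are the elementary convex-analytic identity $\partial R(x_k)=\cl(\ri\,\partial R(x_k))$, the appeal to graph closedness of $\partial R$ to produce $\ulim\in\partial R(\xlim)$, and the bookkeeping of the order reversal when transporting the inequality from $\Strat^*$ back to $\Strat$; beyond these I expect no genuine obstacle.
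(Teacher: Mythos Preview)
Your proposal is correct and follows essentially the same route as the paper's own proof: both establish the left inequality via Proposition~\ref{prop:strat} on the primal side, then obtain $\Man^*_{u_k}\leq\Jj_R(\Man_{x_k})$ from $u_k\in\cl(\ri\,\partial R(x_k))\subset\cl(\Jj_R(\Man_{x_k}))$, combine it with $\Man^*_{\ulim}\leq\Man^*_{u_k}$ (Proposition~\ref{prop:strat} on the dual side), and transport back to the primal via the order-reversing bijection $\Jj_{R^*}$. Your write-up is in fact slightly more explicit about why $\Jj_{R^*}$ reverses order and why the strata are well defined, but the argument is the same.
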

\begin{proof}
By assumption on $R$, $\partial R$ is sequentially closed~\hl{\cite[Theorem~24.4]{rockafellar1970convex}} and thus $\ulim\in \partial R (\xlim)$. 
Now, since $x_k$ is close to $\xlim$, upon invoking~Proposition\;\ref{prop:strat}, we get
\eq{
	\Man_{x_k} \geq \Man_{\xlim}
}
which shows the left-hand side of~\eqref{eq-ident}. Similarly, we have
\eql{\label{eq-continuity-inclusion-dual}
	\Man^*_{u_k} \geq \Man^*_{\ulim}.
}
Using the fact that $\partial R(x_k)$ is a closed convex set together with~\eqref{eq-J-inclusion} leads to
\eq{	
	u_k \in \partial R(x_k) = \cl(\ri( \partial R(x_k) ))
	= \cl( \Jj_R(\{x_k\}) )
	\subset \cl( \Jj_R(\Man_{x_k}) )
}
which entails $\Man_{u_k} \leq \Jj_R(\Man_{x_k})$. Using finally~\eqref{eq-continuity-inclusion-dual} and that by definition, $\Jj_R$ is decreasing for the relation $\leq$, we get
\eq{
	\Man^*_{\ulim} \leq \Man^*_{u_k} \leq \Jj_R(\Man_{x_k})
	\qarrq
	\Man_{x_k} \leq \Jj_{R^*}(\Man^*_{\ulim}) , 
}
whence we deduce the right-hand side of~\eqref{eq-ident}.
\end{proof}

Mirror-stratification thus allows us to prove the simple but powerful claim of Theorem~\ref{thm:main}. As we will see in the rest of the paper, this result will be the backbone to prove sensitivity and finite \hl{enlarged activity} identification results in absence of non-degeneracy. 



%

\section{Sensitivity of Composite Optimization Problems}
\label{sec-sentivity-hybrid}

In this section, we consider a parametric convex optimization problem of the form
\eql{\label{eq-primal}\tag{$\Pp(p)$}
	\umin{x \in \RR^N} E(x,p) \eqdef F(x,p) + R(x) ,
}
depending on the parameter vector $p \in \Param$, where $\Param$ is the parameter set, an open subset of a finite dimensional linear space. 
Sensitivity analysis studies the properties of solutions $\xsol(p)$ of \eqref{eq-primal} (assuming they exist) to perturbations of the parameters vector $p \in \Param$ around some reference point $\porig$.
In Section~\ref{sec-pw-sensitivity}, we briefly review the existing results on this topic and their limitations. We then introduce in Section~\ref{sec-pw-sensitivity-2} our new results obtained owing to the mirror-stratifiable structure.

\subsection{Existing sensitivity results}
\label{sec-pw-sensitivity}

%
Classical sensitivity results (see e.g.~\cite{bonnans2000perturbation,mordukhovich1992sensitivity,dontchev1983pertub}) study the regularity of the set-valued map $p \mapsto \xsol(p)$. A typical result proves Lipschitz continuity of this map provided that $E$ is smooth enough and $E(\cdot,\porig)$ has a local second-order (quadratic) growth at $\xsol(\porig)$, i.e. that there exists some $c > 0$ such that $E(x,\porig) \geq E(\xsol(\porig),\porig) + c \norm{x-\xsol(\porig)}^2$ for $x$ nearby $\xsol(\porig)$.
For $C^2$-smooth optimization, this growth condition is equivalent to positive definiteness of the hessian of $E$ with respect to $x$ evaluated at $(\xsol(\porig),\porig)$~\cite{Fiacco68}.
For classical smooth constrained optimization problems, activity is captured by the subset of active inequality constraints. Under reasonable non-degeneracy conditions (see, for example, \cite{Fiacco68}), this active set is stable under small perturbations to the objective.

A nice nonsmooth sensitivity theory is based on the notion of \textit{partial smoothness}~\cite{lewis2002active}. Partial smoothness is an intrinsically geometrical assumption which, informally speaking, says that $E$ behaves smoothly along an active manifold and sharply in directions normal to the manifold. 
%
Furthermore, under a non-degeneracy assumption at a minimizer (see~\eqref{eq-ri-condition}), it allows appealing statements of second-order optimality conditions (including second-order generalized differentiation) and associated sensitivity analysis around that minimizer~\cite{lewis2002active,lewis2013partial}.

%
\hl{Let $\partial E(x,p)$ be the subdifferential of $E$ according to $x$}. Specializing the result of \cite[Proposition~8.4]{drusvyatskiy2013optimality} to a proper lsc convex function $E(\cdot,p)$, one can show that $C^2$-partial smoothness of $E(\cdot,p)$ at $\xsol(\porig)$ relative to some fixed manifold (independent of $p$) for $0$, together with the non-degeneracy assumption 
\eql{\label{eq-ri-condition}
	0 \in \ri(\partial E(\xsol(\porig),\porig))
}
is equivalent to the existence of an identifiable $C^2$-smooth manifold, i.e. for $\xsol(p)$ and $\usol(p) \in \partial E(\xsol(p),p)$ close enough to $\xsol(\porig)$ and $0$, $\xsol(p)$ lives on the active/partly smooth manifold of $\xsol(\porig)$. If these assumptions are supplemented with a quadratic growth condition of $E$ (see above) along the active manifold, then one also has $C^1$ smoothness of the single-valued mapping $p \mapsto \xsol(p)$~\cite[Theorem~5.7]{lewis2002active}. It can be deduced from \cite[Corollary~4.3]{drusvyatskiygeneric16} that for almost all linear perturbations of lsc convex semialgebraic functions, the non-degeneracy and quadratic growth conditions hold. 
\hl{However, this genericity fails to hold} for many cases of interest. As an example, consider $E$ of~\eqref{eq-variational-ip-intro-noisy} with $\la > 0$ fixed and $p=y$.
If $R$ is a proper lsc convex and semialgebraic function, one has from \cite{drusvyatskiygeneric16} that for Lebesgue almost all $\Phi^*y$, problem \eqref{eq-variational-ip-intro-noisy} has at most one minimizer at which furthermore non-degeneracy and quadratic growth hold. Of course genericity in terms of $\Phi^*y$ does not imply that in terms of $y$, which is our parameter of interest. Not to mention that we supposed $\la$ fixed while it is not in many cases of interest. 

%

\subsection{Sensitivity analysis without non-degeneracy}
\label{sec-pw-sensitivity-2}

For a fixed $p$, \eqref{eq-primal} is a standard composite optimization problem. Here, we assume that the objective is the sum of a $C^1(\RR^N)$ convex function $F(\cdot,p)$ and a nonsmooth proper lsc convex function $R$. 
We denote $\nabla F(x,p)$ the gradient of $F(\cdot,p)$ at $x$.

We are going to show that if the minimizer $\xsol(\porig)$ is unique, slight perturbations $p$ of $\porig$ generate solutions $\xsol(p)$ that are in a ``controlled'' stratum $\Man_{\xsol(p)}$ precisely sandwiched to extreme strata defined from a primal-dual pair associated to $\porig$.

%

\begin{thm}[Sensitivity analysis with mirror-stratifiable functions]\label{thm-stability-ps}
	Let $\porig$ be a given point in the parameter space $\Param$. Assume that: (i) $E(\cdot,\porig)$ has a unique minimizer $\xsol(\porig)$, (ii) $E$ is lsc on $\RR^N \times \Param$, (iii)  
	$E(\xsol(\porig),\cdot)$ is continuous at $\porig$, (iv) $\nabla F$ is continuous at $(\xsol(\porig),\porig)$, and (v) $E$ is level-bounded\footnote{
Recall from \cite[Definition~1.16]{rockafellar1998var} that the function $E: \RR^N \times \Param$ is said to be level-bounded in $x$ locally uniformly in $p$ around $\porig$ if for each $c \in \RR$, there exists a neighbourhood $\Vv$ of $\porig$ and a bounded set $\Omega$ such that the sublevel set
$
\enscond{x \in \RR^N}{E(x,p) \leq c} \subset \Omega 
$
for all $p \in \Vv$.}
 in $x$ uniformly in $p$ locally around $\porig$.
	If $R$ is mirror-stratifiable according to $(\Strat$,$\Strat^*)$,
	%
	%
	then for all $p$ close to $\porig$, any minimizer $\xsol(p)$ of $E(\cdot,p)$ is localized as follows
	\eql{\label{eq-ident-manif-hybrid}
		\Man_{\xsol(\porig)} \leq \Man_{\xsol(p)} \leq \Jj_{R^*}(\Man^*_{\usol(\porig)})
		\qwhereq
		\usol(\porig) \eqdef -\nabla F(\xsol(\porig),\porig).
	}
\vspace*{-2ex}
\end{thm}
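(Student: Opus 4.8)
The plan is to obtain the two inclusions of \eqref{eq-ident-manif-hybrid} as a direct instance of Theorem~\ref{thm:main}, fed with the primal sequence of perturbed minimizers and the dual sequence provided by the first-order optimality conditions. The preparatory work consists in showing that such minimizers exist for $p$ near $\porig$ and that they converge to $\xsol(\porig)$.

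First I would settle existence and convergence. Fix a sequence $p_k \to \porig$. Using the continuity of $E(\xsol(\porig),\cdot)$ at $\porig$ (iii), the sublevel set $\enscond{x}{E(x,p) \leq E(\xsol(\porig),\porig)+1}$ contains $\xsol(\porig)$ for $p$ close to $\porig$, hence is nonempty; combined with level-boundedness (v) and lower semicontinuity of $E(\cdot,p)$ (which follows from (ii)), this sublevel set is nonempty and contained in a fixed compact set, so $\Argmin(E(\cdot,p_k)) \neq \emptyset$ for $k$ large by Weierstrass. Now pick arbitrary minimizers $x_k \in \Argmin(E(\cdot,p_k))$; by (v) the sequence $(x_k)$ is eventually bounded, so it has cluster points. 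If $x_{k_j} \to \xlim$, minimality gives $E(x_{k_j},p_{k_j}) \leq E(\xsol(\porig),p_{k_j})$, and letting $j\to\infty$ (invoking (ii) on the left and (iii) on the right) yields $E(\xlim,\porig) \leq E(\xsol(\porig),\porig)$, so $\xlim \in \Argmin(E(\cdot,\porig)) = \ens{\xsol(\porig)}$ by uniqueness (i). Every cluster point being $\xsol(\porig)$ and the sequence being bounded, we conclude $x_k \to \xsol(\porig)$. The delicate point here, and the only genuine subtlety of the argument, is that one is only allowed to test optimality against the single point $\xsol(\porig)$, since that is the only point at which $p$-continuity of $E$ is assumed; no joint continuity of $E$ is available.

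Then I would produce the dual sequence and invoke Theorem~\ref{thm:main}. Since $F(\cdot,p_k)$ is convex and $C^1$ and $R$ is proper lsc convex, Fermat's rule together with the subdifferential sum rule gives $u_k \eqdef -\nabla F(x_k,p_k) \in \partial R(x_k)$. By continuity of $\nabla F$ at $(\xsol(\porig),\porig)$ (iv) together with $x_k \to \xsol(\porig)$ and $p_k \to \porig$, we have $u_k \to -\nabla F(\xsol(\porig),\porig) = \usol(\porig)$. Hence $(x_k,u_k) \to (\xsol(\porig),\usol(\porig))$ with $u_k \in \partial R(x_k)$, and Theorem~\ref{thm:main} (using that $R$ is mirror-stratifiable with respect to $(\Strat,\Strat^*)$) gives, for $k$ large, $\Man_{\xsol(\porig)} \leq \Man_{x_k} \leq \Jj_{R^*}(\Man^*_{\usol(\porig)})$. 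Finally, a routine contradiction argument promotes this to the claimed statement on a whole neighbourhood of $\porig$: were there no such neighbourhood, one could extract a sequence $p_k \to \porig$ together with minimizers violating \eqref{eq-ident-manif-hybrid} for every $k$, contradicting what was just shown. The only real obstacle is the minimizer-convergence step above; everything downstream is bookkeeping on top of Theorem~\ref{thm:main}.
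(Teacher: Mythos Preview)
Your proof is correct and follows essentially the same route as the paper: establish convergence of perturbed minimizers to $\xsol(\porig)$ via the variational argument (optimality tested at $\xsol(\porig)$, then (ii) and (iii) to pass to the limit, then uniqueness (i)), build the dual sequence from the first-order condition and (iv), and feed both into Theorem~\ref{thm:main}. Your version is in fact slightly more thorough than the paper's, which works along a single subsequence and does not spell out the final contradiction step promoting the conclusion from sequences to a full neighbourhood of $\porig$.
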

\begin{proof}
Le $\pa{p_k}_{k \in \NN} \subset \Param$ be a sequence of parameters converging to $\porig$. 
By assumption on $F$ and $R$, $E(\cdot,p)$ is proper for any $p$. Since it is also lsc and is level-bounded in $x$ uniformly in $p$ locally around $\porig$ by conditions (ii) and (v). It follows from \cite[Theorem~1.17(a)]{rockafellar1998var} that $\Argmin E(\cdot, p_k)$ is non-empty and compact, and, in turn, any sequence $\pa{\xsol_k}_{k \in \NN}$ of minimizers is bounded. We consider \hl{a subsequence, which for simplicity, we denote again} $\xsol_k \rightarrow \xlim$. 
We then have
\begin{align*}
E(\xlim,\porig) &\underset{\text{condition (ii)}}{\leq} \liminf_k E(\xsol_k,p_k) \\
			 &\underset{\text{Optimality}}{\leq} \liminf_k E(\xsol(\porig),p_k) \\
			 &\underset{\text{condition (iii)}}{=} \lim_k E(\xsol(\porig),p_k) = E(\xsol(\porig),\porig) .
\end{align*}
By the uniqueness condition (i), we conclude that $\xlim=\xsol(\porig)$. 
Let $\usol_k \eqdef -\nabla F(\xsol_k,p_k)$. 
Since $\nabla F$ is continuous at $(\xorig,\porig)$ by assumption (iv), one has $\usol_k \rightarrow \uorig$.
The first-order optimality condition of problem~\eqref{eq-primal} reads $\usol_k \in \partial R(\xsol_k)$. Hence we are now in position to invoke Theorem~\ref{thm:main} to conclude.
\end{proof}

\begin{figure}
\centering
\begin{tabular}{@{}c@{}}
\includegraphics[width=0.7\linewidth]{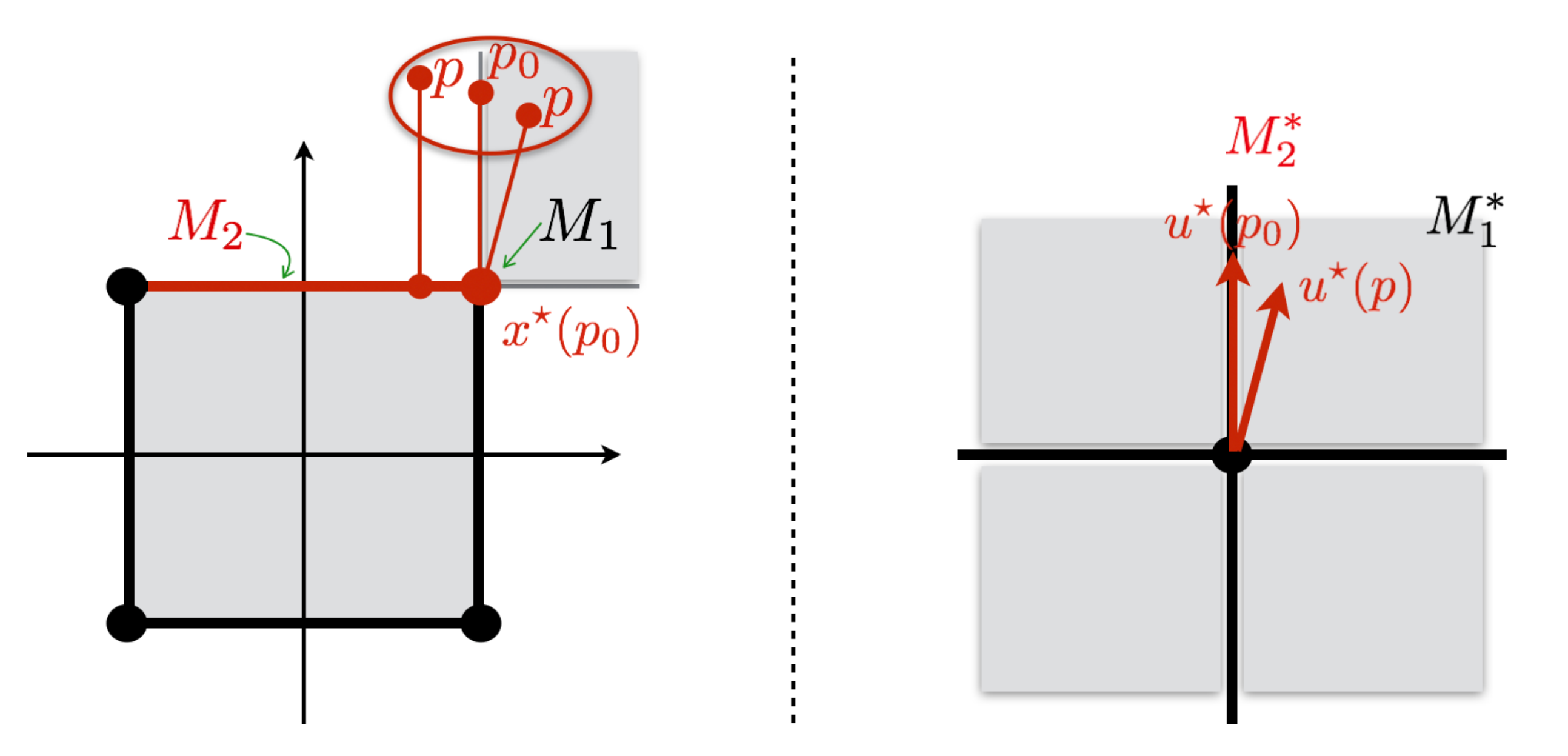} \\
(a) \\[-0ex]
\includegraphics[width=0.7\linewidth]{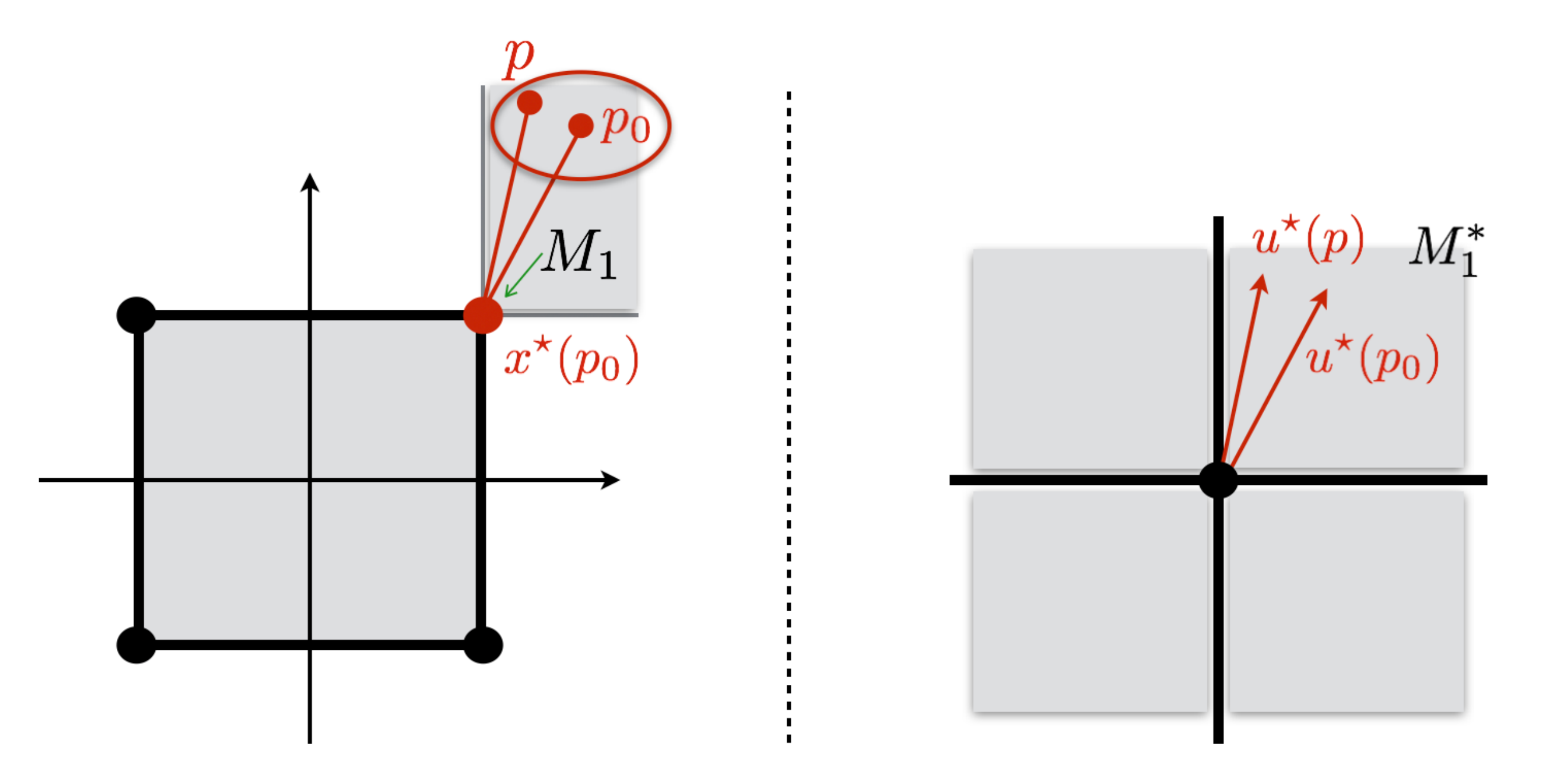} \\
(b)
\end{tabular}
\caption{
\hl{Graphical illustration of Theorem~\ref{thm-stability-ps} for a projection problem on the $\ell_\infty$ ball. (a) corresponds to a degenerate situation while (b) to a non-degenerate one.}
}
\label{fig:projsens}
\end{figure}

\begin{rem}[Single manifold identification]\label{rem:compnondeg}
In the special case where we have the non-degeneracy condition 
\eql{\label{eq-hyp-ri}
	\usol(\porig) = -\nabla F(\xsol(\porig),\porig)\in \ri(\partial R(\xsol(\porig))),
}
our result simplifies and we recover the known exact identification result.
To see this, we note that \eqref{eq-hyp-ri} also reads $\usol(\porig)\in \Jj_R(\ens{\xsol(\porig)})$, and thus $\Man^*_{\usol(\porig)} =\Jj_R(\ens{\xsol(\porig)})$. In this case, \eqref{eq-ident-manif-hybrid}~becomes $\Man_{\xsol(p)}=\Man_{\xorig}$ for all $p$ close to $\porig$. Such a result was also established in \cite{HareLewis04,lewis2013partial} under condition \eqref{eq-hyp-ri}, when $F(\cdot,\porig)$ is also locally $C^2$ around $\xsol(\porig)$ and $R$ is partly smooth at $\xsol(\porig)$ relative to a $C^2$-smooth manifold $\Man_{\xsol(\porig)}$. \hl{An example of this non-degenerate scenario is shown in Figure~\ref{fig:projsens}(b).}
Our result covers the more delicate degenerate situation where $\usol(\porig)$ might be on the relative boundary of the subdifferential, but requires the stronger mirror-stratifiability structure on the non-smooth part. However, the active set at $\xsol(p)$ is in general not unique for all $p$ close to $\porig$, \hl{hence the terminology enlarged activity.}
\end{rem}

\begin{exmp}
\hl{The result of Theorem~\ref{thm-stability-ps} is illustrated in Figure~\ref{fig:projsens}, for the projection problem on a $\ell_\infty$ ball, in both the degenerate and the non-degenerate situations. More precisely, we take $E(x,p)=\tfrac{1}{2}\norm{x-p}^2+\iota_{\Ball_{\ell_\infty}}(x)$, so that $\usol(p) = p - \xsol(p)$, where $\xsol(p) = \Proj_{\Ball_{\ell_\infty}}(p)$. Figure~\ref{fig:projsens}(a) illustrates the degenerate case where $\usol(\porig)$ belongs to the relative boundary of the normal cone of $\Ball_{\ell_\infty}$ at $\xsol(\porig)$. Taking a perturbation $p$ around $\porig$ entails that $\xsol(p)$ belongs either to $\Man_1 = \Man_{\xsol(\porig)}$ or to the enlarged stratum $M_2 = \Jj_{R^*}(\Man^*_{\usol(\porig)}) = \Jj_{R^*}(\Man^*_2)$. In the non-degenerate case of Figure~\ref{fig:projsens}(b) presented in Remark~\ref{rem:compnondeg}, the optimal solution $\xsol(p)$ is always on $\Man_1$ for small perturbations.}
\end{exmp}

%
%
%

\begin{rem}[Quadratic growth]
We can also establish the Lipschitz continuity of $\xsol(p)$ under the conditions of Theorem~\ref{thm-stability-ps}, under an additional second-order growth condition, just as in classical sensitivity analysis (see~\cite[Section~4.2.1]{bonnans2000perturbation}). Let us assume that there exists a neighbourhood $\Vv$ of $\xsol(\porig)$ and $\kappa>0$ such that
	\eql{\label{eq-grosse}
		E(x,\porig) \geq E(\xsol(\porig),\porig) +\kappa\norm{x-\xsol(\porig)}^2 \qquad \forall x \in \Vv.
	}
Since $\xsol_k \to \xsol(\porig)$ for $p_k \to \porig$ (see the proof of Theorem~\ref{thm-stability-ps}), we have $\xsol_k \in \Vv$ for $k$ large enough. If, moreover, $\nabla F(x,\cdot)$ is Lipschitz-continuous in a neighbourhood of $\porig$ with Lipschitz constant $\nu$ independent of $x \in \Vv$, we get
\begin{equation*}
\begin{aligned}
\kappa\norm{\xsol_k-\xsol(\porig)}^2 
&\leq E(\xsol_k,\porig) - E(\xsol(\porig),\porig) \\
&= \bpa{(E(\xsol(\porig),p_k) - E(\xsol_k,p_k)) - (E(\xsol(\porig),\porig) - E(\xsol_k,\porig))} \\
&\qquad - \bpa{E(\xsol(\porig),p_k) - E(\xsol_k,p_k)} \\
\text{\scriptsize{($\xsol_k$ minimizes $E(\cdot,p_k)$)}} 
&\leq (E(\xsol(\porig),p_k) - E(\xsol_k,p_k)) - (E(\xsol(\porig),\porig) - E(\xsol_k,\porig) \\
\text{\scriptsize{(By \eqref{eq-primal})}} 
&= (F(\xsol(\porig),p_k) - F(\xsol_k,p_k)) - (F(\xsol(\porig),\porig) - F(\xsol_k,\porig)) \\
\text{\scriptsize{(Mean-Value Theorem)}} 
&\leq \bpa{\sup_{x \in \Vv} \norm{\nabla F(x,p_k) - \nabla F(x,\porig)}}\norm{\xsol_k - \xsol(\porig)} \\
&\leq \nu \norm{p_k-\porig}\norm{\xsol_k - \xsol(\porig)} .
\end{aligned}
\end{equation*}
whence we conclude that
\[
d(\xsol(\porig),\Argmin E(\cdot,p_k)) \leq \nu/\kappa \norm{p_k-\porig} .
\]
\end{rem}


\section{Regularized Inverse Problems}
\label{sec-ip-regul}

A typical context where our framework of mirror-stratifiability is of usefulness is that of studying stability to noise of regularized linear inverse problems. We come back to the situation presented in introduction: studying stability issues to perturbed observations of the form $y=\yorig+w$ amounts to analyzing sensitivity of the minimizers and the optimal value function in~\eqref{eq-variational-ip-intro-noisy} when the parameter $p=(\la,y)$ evolves around the reference point $\porig=(0,\yorig)$. 
Unlike the usual sensitivity analysis theory setting recalled in the previous section (e.g.~\cite{bonnans2000perturbation}), here the objective $E$ may not even be continuous at $\porig$. 

We provide hereafter some pointers to the relevant literature, then we develop our sensitivity results for mirror-stratifiable functions. These results involve primal and dual solutions to the noiseless problem 
\begin{align}
	\label{eq-variational-ip-intro-noiseless}\tag{$\Pp(0,y)$}
	&\umin{x \in \RR^N} E(x,(0,y)) \eqdef R(x) \quad\text{s.t.}\quad \Phi x  =  y, 
\end{align}
%
%
%

\subsection{Existing sensitivity results for regularized Inverse Problems}
\label{sec-pw-ip-regul}

\paragraph{Lipschitz stability}

Assume there exists a dual multiplier $\eta \in \RR^P$ (sometimes referred to as a ``dual certificate'') for the noiseless constrained problem~\eqref{eq-variational-ip-intro-noiseless} taken at $y=\yorig$ such that $\Phi^* \eta \in \partial R(\xorig)$. The latter condition is equivalent to $\xorig$ being a minimizer of~\eqref{eq-variational-ip-intro-noiseless}. This condition goes by the name of the ``source'' or ``range'' condition in the inverse problems literature. It has been widely used to derive stability results in terms of $\norm{\Phi \xsol(\la,y)-\Phi\xorig}$ or $R(\xsol(\la,y))-R(\xorig)-\dotp{\Phi^* \eta}{\xsol(\la,y)-\xorig}$;
 see~\cite{scherzer2009variational} and references therein. 
%
To afford stability in terms of $\norm{\xsol(\la,y)-\xorig}$ directly, the range condition has to be strengthened to its non-degenerate version $\Phi^* \eta \in \ri(\partial R(\xorig))$. 
It has been shown that this condition implies that the set-valued map $(\la,y) \mapsto \xsol(\la,y)$ is Lipschitz-continuous at $(0,\yorig)$; see~\cite{grasmair2011necessary}
and \cite{2014-vaiter-ps-review}.
 
\paragraph{Active set stability}

In the case where $R$ is partly smooth, one can approach an even more complete sensitivity theory by studying stability of the partly smooth manifold of $R$ at $\xorig$. In particular, it can be shown from that if \hl{an appropriate non-degeneracy assumption holds, see~\cite{2014-vaiter-ps-consistency}},
then problems~\eqref{eq-variational-ip-intro-noiseless} and~\eqref{eq-variational-ip-intro-noisy} have unique minimizers (respectively $\xorig$ and $\xsol(\la,y)$), and $\xsol(\la,y)$ lies on the partly smooth manifold of $R$ at~$\xorig$. Observe that compared to Lipschitz stability, active set stability is more demanding as the non-degeneracy condition has to hold for a specific dual multiplier, which is obviously more stringent. 
This type of results has appeared many times in the literature for special cases, e.g. for the $\ell_1$ norm~\cite{fuchs2004on-sp,zhao2006model}, 
the nuclear norm~\cite{bach2008trace}.
The work in~\cite{vaiter-model-linear15,2014-vaiter-ps-consistency} has unified all these results.

\paragraph{Non-degeneracy in practice for deconvolution and compressed sensing}

The above results require that some abstract non-degeneracy condition holds, which imposes strict limitations on practical situations. In particular, when $\Phi$ is a convolution operator and $\xorig$ is sparse, \cite{candes2013super} studies Lipschitz stability and~\cite{duval2013spike} support stability. In this setting, the non-degeneracy condition holds whenever the \hl{non-zero entries} are separated enough, which is not often verified. 
Another setting where this stability theory has been applied in when $\Phi$ is drawn from a random matrix ensemble, i.e.\;compressed sensing. For a variety of partly smooth regularizers (including the $\ell_1$, nuclear and $\ell_{1,2}$ norms), the non-degeneracy condition holds with high probability if, roughly speaking, the sample size $P$ is sufficiently larger than the ``dimension'' of the active set at $\xorig$ (see e.g.\,\cite{candes2005decoding, dossal2012sharp}). Again this is a clear limitation as illustrated in Section~\ref{sec-motivnum}.
\subsection{Primal and dual problems}

Suppose we have observations of the form \eqref{eq-fwd-ip}, and we want to recover $\xorig$ (or a provably good approximation of it). As advocated in Section~\ref{sec-motivation}, a popular approach is to adopt a regularization framework which can be cast as the optimization problem~\eqref{eq-variational-ip-intro-noisy} (for $\la>0$) and~\eqref{eq-variational-ip-intro-noiseless} (when $\la=0$). In the sequel, \hl{we assume that
\begin{equation}\label{eq:pbinvcoer}
  \Rinf(z) > 0, \quad \forall z \in \ker(\Phi) \setminus \ens{0} ,
\end{equation}
where $\Rinf$ is the asymptotic (or recession) function of $R$, defined as 
\begin{equation*}
\Rinf(z) \eqdef \lim_{t\to+\infty}\frac{R(x+tz)-R(x)}{t},  \quad  \forall x \in \dom(R) .
\end{equation*}
Condition \eqref{eq:pbinvcoer} is a necessary and sufficient condition for the set of minimizers of \eqref{eq-variational-ip-intro-noisy} and~\eqref{eq-variational-ip-intro-noiseless} to be non-empty and compact~\cite[Lemma~5.1]{vaiterphd}. It is satisfied for example when $R$ is coercive.} 
%

\begin{rem}[Discontinuity of $F$]
Letting $p \eqdef (\la,y) \in \Param \eqdef \RR_+ \times \RR^P$, we see that~\eqref{eq-variational-ip-intro-noisy} and~\eqref{eq-variational-ip-intro-noiseless} are instances of~\eqref{eq-primal}, by setting
\eql{\label{eq-ip-setup} 
	F(x,p) \eqdef
	\choice{
		\frac{1}{2\la}\norm{ y - \Phi x }^2 &\qifq \la >0, \\
		\ind_{\Hh_y}(x) 					&\qifq \la=0, 
	}
	\qwhereq
	\Hh_y \eqdef \enscond{x \in \RR^N}{\Phi x = y}.
}
The corresponding function $F$ considered does not obey the assumptions of Theorem~\ref{thm-stability-ps}.
Indeed, the parameter set $\Param$ is not open, with $\porig=(0,\yorig)$ that lives on the boundary of $\Param$, and $F$ is only lsc at such $\porig$ (because of the affine constraint $\Phi x=y$). 
%
The main consequence will be that one \hl{can no longer} allow $p$ to vary freely nearby $\porig$. 
\end{rem}

%
%


In order to study the sensitivity of solutions, we look at the Fenchel-Rockafellar dual problem, which reads (for all $\la \geq 0$)
\eql{\label{eq-dual-ip}\tag{$\Dd(\la,y)$}
	\umax{q \in \RR^P} \dotp{q}{y} - \frac{\la}{2} \norm{q}^2 - R^*(\Phi^* q). 
}
We denote $\Dsol(\la,y)$ the set of solutions of $(\Dd(\la,y))$. Note that for $\la>0$, thanks to strong concavity, there is a unique dual solution $\qsol(\la,y)$, i.e.~$\Dsol(\la,y)=\ens{\qsol(\la,y)}$. 
We also have from the primal-dual extremality relationship that for any primal solution $\xsol(\la,y)$ of~\eqref{eq-variational-ip-intro-noisy}, 
\eql{\label{eq:extrprimaldual}
	\qsol(p) = \frac{y-\Phi \xsol(p)}{\la} \qandq \Phi^* \qsol(p) \in \partial R(\xsol(p)).
}
While $(\Dd(0,y))$ is the dual of $(\Pp(0,y))$, it is important to realize that it is not the limit of $(\Dd(\la,y))$ in the sense that its set of dual solutions is in general not a singleton. 
Lemma~\ref{lemma-conv-dual-ip} hereafter singles out a specific dual optimal solution (sometimes called ``minimum norm certificate'') defined by
%
\eql{\label{eq-dfn-minnorm-cert}
	\qF(0,y) = \uargmin{q \in \RR^P} \enscond{\norm{q}\!}{ \Phi^* q \in \partial R(\xsol(0,y)) } = \uargmin{q \in \RR^P} \enscond{\norm{q}\!}{ q \in \Dsol(0,y) }.
}
The following two important lemmas ensure the convergence of the solutions to the primal and dual problems as $\la \rightarrow 0$ and as $\norm{y-\yorig} \to 0$. 

\begin{lem}[Primal solution convergence]\label{lemma-conv-primal-ip}
	Assume that $\xorig$ is the unique solution to $(\Pp(0,\yorig))$.
	For any sequence of parameters $p_k = (\la_k,y_k)$ with $\la_k>0$ such that 
	\eq{
		\pa{\frac{\norm{y_k-\yorig}^2}{\la_k},\la_k} \longrightarrow (0,0), 
	}
	and any solution $\xsol(p_k)$ of $(\Pp(p_k))$, 
	we have $\xsol(p_k) \rightarrow \xorig$.
\end{lem}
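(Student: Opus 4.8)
The plan is to argue by the usual compactness-plus-lower-semicontinuity scheme, exploiting the coercivity hypothesis \eqref{eq:pbinvcoer} to get boundedness of the minimizers, and then identifying any cluster point with $\xorig$ via the uniqueness assumption. First I would fix a sequence $p_k=(\la_k,y_k)$ with $\la_k>0$ and $\bigl(\norm{y_k-\yorig}^2/\la_k,\la_k\bigr)\to(0,0)$, and for each $k$ pick a minimizer $\xsol(p_k)$ of $(\Pp(p_k))$ (which exists by the discussion following \eqref{eq:pbinvcoer}). The first step is \emph{boundedness}: I would compare $E(\xsol(p_k),p_k)$ with $E(x^\star,p_k)$ for some fixed feasible point $x^\star$ of $(\Pp(0,\yorig))$, e.g. $x^\star=\xorig$. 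Optimality gives
\eq{
	R(\xsol(p_k)) + \frac{1}{2\la_k}\norm{y_k-\Phi\xsol(p_k)}^2 \;\leq\; R(\xorig) + \frac{1}{2\la_k}\norm{y_k-\yorig}^2 ,
}
since $\Phi\xorig=\yorig$. The right-hand side is bounded (it converges to $R(\xorig)$ because $\norm{y_k-\yorig}^2/\la_k\to 0$), so both $R(\xsol(p_k))$ stays bounded above and $\norm{y_k-\Phi\xsol(p_k)}^2/\la_k$ stays bounded; in particular $\Phi\xsol(p_k)\to\yorig$. Boundedness of $R(\xsol(p_k))$ together with the level-boundedness implied by the coercivity condition \eqref{eq:pbinvcoer} — via \cite[Lemma~5.1]{vaiterphd} — yields that the sequence $(\xsol(p_k))_k$ is bounded. (Alternatively, I would argue directly: if a subsequence had $\norm{\xsol(p_k)}\to\infty$, normalizing and passing to a limit would produce a nonzero $z\in\ker\Phi$ with $\Rinf(z)\leq 0$, contradicting \eqref{eq:pbinvcoer}.)

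The second step is to identify cluster points. Extract a convergent subsequence $\xsol(p_k)\to\xlim$ (relabelled). From $\Phi\xsol(p_k)\to\yorig$ we get $\Phi\xlim=\yorig$, so $\xlim$ is feasible for $(\Pp(0,\yorig))$. It remains to show $\xlim$ is optimal for that problem, i.e. $R(\xlim)\leq R(x)$ for every $x$ with $\Phi x=\yorig$. For such an $x$, feasibility and optimality for $(\Pp(p_k))$ give, after dropping the nonnegative quadratic term on the left,
\eq{
	R(\xsol(p_k)) \;\leq\; R(x) + \frac{1}{2\la_k}\norm{y_k-\yorig}^2 ,
}
and taking $\liminf_k$, using lower semicontinuity of $R$ on the left and $\norm{y_k-\yorig}^2/\la_k\to 0$ on the right, gives $R(\xlim)\leq\liminf_k R(\xsol(p_k))\leq R(x)$. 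Hence $\xlim$ solves $(\Pp(0,\yorig))$, and by the uniqueness hypothesis $\xlim=\xorig$. Since every subsequence has a further subsequence converging to $\xorig$, the whole sequence converges: $\xsol(p_k)\to\xorig$.

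The main obstacle is the boundedness step, because the objective $F$ is not continuous at $\porig$ and $\la_k\to 0$ makes the data-fidelity coefficient blow up, so one cannot simply invoke a standard parametric level-boundedness result as in \cite[Theorem~1.17]{rockafellar1998var}. The key is to use the coercivity/recession condition \eqref{eq:pbinvcoer} to control the kernel directions of $\Phi$ while the quadratic term controls the complementary directions through $\Phi\xsol(p_k)\to\yorig$; making the normalization argument rigorous (handling the recession-function inequality in the limit and ensuring the bound on $R(\xsol(p_k))$ from above transfers to a bound on $\norm{\xsol(p_k)}$) is the one place that requires care. Everything else is the routine lsc-plus-compactness argument.
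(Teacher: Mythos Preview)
Your approach is essentially the same as the paper's: use the optimality inequality with $\xorig$ as comparison point, extract a convergent subsequence via the coercivity condition~\eqref{eq:pbinvcoer}, show the limit is feasible and optimal for $(\Pp(0,\yorig))$, and conclude by uniqueness. One ordering issue is worth flagging: you assert that $\norm{y_k-\Phi\xsol(p_k)}^2/\la_k$ stays bounded \emph{before} establishing boundedness of $(\xsol(p_k))_k$, but that inference needs $R(\xsol(p_k))$ bounded \emph{below}, which is not automatic for a general proper lsc convex $R$ on all of $\RR^N$; it only follows once you know the sequence sits in a bounded set. The paper handles this in the right order: first boundedness of the sequence from~\eqref{eq:pbinvcoer}, then $\ul{r}\eqdef\inf_k R(\xsol_k)>-\infty$ since $R$ is bounded below on bounded sets, then multiply the optimality inequality by $\la_k$ to get $\norm{y_k-\Phi\xsol_k}\to 0$. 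Reorder your argument accordingly and it matches the paper's proof.
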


\begin{proof}
Denote $\xsol_k \eqdef \xsol(p_k)$. Since $\pa{\xsol_k}_{k \in \NN}$ is a bounded sequence \hl{by \eqref{eq:pbinvcoer}}, one can extract a converging subsequence, \hl{which for simplicity, we denote again} $\xsol_k \rightarrow \xlim$.  
Optimality of $\xsol_k$ implies that 
\eql{\label{eq-proof-ip}
R(\xsol_k)	\leq \frac{1}{2\la_k} \norm{y_k - \Phi \xsol_k}^2 + R(\xsol_k)
	\leq 
	\frac{1}{2\la_k} \norm{y_k - \yorig}^2 + R(\xorig).
}
Passing to the limit and using the hypothesis that $\frac{1}{\la_k} \norm{y_k - \yorig}^2 \rightarrow 0$, we get
\[
\limsup_k R(\xsol_k) \leq R(\xorig).
\]
On the other hand, lower semi-continuity of $R$ entails $
\liminf_k R(\xsol_k) \geq R(\xlim).
$
Combining these two inequalities, we deduce that
$
R(\xlim) \leq R(\xorig) .
$
Since $R$ is proper and lsc, it is bounded from below on bounded sets~\cite[Corollary~1.10]{rockafellar1998var}. Let $\ul{r}=\inf_{k} R(\xsol_k)$ which then satisfies $-\infty < \ul{r} < +\infty$. 
Substracting $\ul{r}$ from~\eqref{eq-proof-ip} and multiplying by $\la_k$, one obtains
\eql{\label{eq-proof-ip2}
\frac{1}{2} \norm{y_k - \Phi \xsol_k}^2 \leq \frac{1}{2} \norm{y_k - \Phi \xsol_k}^2 + \la_k (R(\xsol_k)-\ul{r})
	\leq 
	\frac{1}{2} \norm{y_k - \yorig}^2 + \la_k (R(\xorig)-\ul{r}).
}
Consequently, passing to the limit in~\eqref{eq-proof-ip2} shows that $\norm{\yorig-\Phi\xlim} = 0$, i.e. $\xlim$ is a feasible point of $(\Pp(0,\yorig))$.
Altogether, this shows that $\xlim$ is a solution of $(\Pp(0,\yorig))$, and by uniqueness of the minimizer, $\xlim=\xorig$.	
\end{proof}

\begin{lem}[Dual solution convergence]\label{lemma-conv-dual-ip}
	For any sequence of parameters $p_k = (\la_k,y_k)$ such that 
	\eq{
		\pa{\frac{\norm{y_k-\yorig}}{\la_k},\la_k} \longrightarrow (0,0), 
	}
	we have $\qsol(p_k) \rightarrow \qF(0,\yorig)$.
\end{lem}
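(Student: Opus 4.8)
\textbf{Proof plan for Lemma~\ref{lemma-conv-dual-ip}.}

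The plan is to exploit strong concavity of the dual objective in~\eqref{eq-dual-ip} to pin down $\qsol(p_k)$, then pass to the limit using the convergence rates encoded in the hypothesis $(\norm{y_k-\yorig}/\la_k,\la_k)\to(0,0)$. First I would fix notation: write $\qsol_k \eqdef \qsol(p_k)$ and $\qlim \eqdef \qF(0,\yorig)$, and recall that $\qlim$ is characterized by~\eqref{eq-dfn-minnorm-cert} as the minimum-norm element of $\Dsol(0,\yorig)$, equivalently the minimum-norm $q$ with $\Phi^*q \in \partial R(\xorig)$. The overall strategy is the standard two-step argument for strongly convex problems: (1) obtain a uniform bound on $(\qsol_k)_k$ so that a subsequential limit $\qlim'$ exists; (2) show $\qlim'$ is feasible for the limiting condition (i.e.\ $\Phi^*\qlim' \in \partial R(\xorig)$, so $\qlim' \in \Dsol(0,\yorig)$) and moreover that $\norm{\qlim'} \leq \norm{\qlim}$, which by the uniqueness of the minimum-norm certificate forces $\qlim' = \qlim$; since every subsequence has the same limit, the whole sequence converges.

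For step (1), the key inequality comes from comparing the dual value at $\qsol_k$ with the dual value at (a scaled version of) $\qlim$. The dual objective is $\Dd_{p_k}(q) = \dotp{q}{y_k} - \frac{\la_k}{2}\norm{q}^2 - R^*(\Phi^*q)$. Plugging in $q = \qlim$ (which satisfies $\Phi^*\qlim \in \partial R(\xorig)$, hence by Fenchel--Young $R^*(\Phi^*\qlim) = \dotp{\Phi^*\qlim}{\xorig} - R(\xorig) = \dotp{\qlim}{\yorig} - R(\xorig)$) gives a lower bound on the optimal value, while optimality of $\qsol_k$ and the $\frac{\la_k}{2}\norm{\cdot-\qsol_k}^2$ strong-concavity gap yield
\eq{
	\frac{\la_k}{2}\norm{\qsol_k - \qlim}^2 \leq \Dd_{p_k}(\qsol_k) - \Dd_{p_k}(\qlim).
}
Expanding the right-hand side using the Fenchel--Young identity for $\qlim$ and the inequality $R^*(\Phi^*\qsol_k) \geq \dotp{\Phi^*\qsol_k}{\xorig} - R(\xorig) = \dotp{\qsol_k}{\yorig} - R(\xorig)$, the $R^*$ terms and the $R(\xorig)$ terms telescope, leaving, after a short computation,
\eq{
	\frac{\la_k}{2}\norm{\qsol_k - \qlim}^2 \leq \dotp{\qsol_k - \qlim}{y_k - \yorig} - \frac{\la_k}{2}\norm{\qsol_k}^2 + \frac{\la_k}{2}\norm{\qlim}^2 .
}
Rearranging (completing the square on the left using the $-\frac{\la_k}{2}\norm{\qsol_k}^2 + \frac{\la_k}{2}\norm{\qlim}^2$ terms, which contributes $-\la_k\dotp{\qsol_k-\qlim}{\qlim}$ plus a square) and dividing by $\la_k$, one arrives at an estimate of the form $\norm{\qsol_k - \qlim}^2 \leq C \frac{\norm{y_k-\yorig}}{\la_k}\norm{\qsol_k - \qlim} + (\text{lower order})$, from which boundedness of $(\qsol_k)_k$ follows because $\norm{y_k-\yorig}/\la_k \to 0$.

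For step (2), take any subsequential limit $\qsol_{k_j} \to \qlim'$. Feasibility $\Phi^*\qlim' \in \partial R(\xorig)$ follows from passing to the limit in the extremality relation $\Phi^*\qsol_{k_j} \in \partial R(\xsol(p_{k_j}))$ using Lemma~\ref{lemma-conv-primal-ip} (so $\xsol(p_{k_j}) \to \xorig$) together with sequential closedness of $\partial R$~\cite[Theorem~24.4]{rockafellar1970convex}; here I need the primal hypothesis $\norm{y_k-\yorig}^2/\la_k \to 0$, which is implied by $\norm{y_k-\yorig}/\la_k\to0$ and $\la_k \to 0$ (indeed $\norm{y_k-\yorig}^2/\la_k = \la_k (\norm{y_k-\yorig}/\la_k)^2 \to 0$). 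Then the same chain of inequalities as in step~(1), now read in the limit, gives $\limsup_j \norm{\qsol_{k_j}}^2 \leq \norm{\qlim}^2$ after dividing through by $\la_k$ and using the bound from step~(1); hence $\norm{\qlim'} \leq \norm{\qlim}$, and since $\qlim' \in \Dsol(0,\yorig)$ while $\qlim$ is the unique minimum-norm element of $\Dsol(0,\yorig)$, we conclude $\qlim' = \qlim$. As every subsequence admits a further subsequence converging to $\qlim$, the full sequence converges: $\qsol(p_k) \to \qF(0,\yorig)$.

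The main obstacle is the bookkeeping in the chain of inequalities of step~(1): one must carefully balance the linear term $\dotp{\qsol_k-\qlim}{y_k-\yorig}$ (which after dividing by $\la_k$ scales like $\norm{y_k-\yorig}/\la_k$) against the quadratic term, and verify that the cross terms arising from $-\frac{\la_k}{2}\norm{\qsol_k}^2+\frac{\la_k}{2}\norm{\qlim}^2$ do not spoil the bound. The conceptual content — strong concavity plus Fenchel--Young plus the minimum-norm characterization — is routine, but getting the scaling exactly right so that $\norm{y_k-\yorig}/\la_k\to0$ (rather than a weaker rate) is what is actually needed requires care; this is presumably why the hypothesis is stated with $\norm{y_k-\yorig}/\la_k$ and not $\norm{y_k-\yorig}^2/\la_k$.
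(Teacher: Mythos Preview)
Your approach is correct in spirit but takes a genuinely different route from the paper, and there is one small gap worth flagging.

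\textbf{Comparison with the paper.} The paper splits the problem via the triangle inequality
\[
\norm{\qsol(\la_k,y_k)-\qF} \leq \norm{\qsol(\la_k,y_k)-\qsol(\la_k,\yorig)} + \norm{\qsol(\la_k,\yorig)-\qF},
\]
handling the first term by the observation $\qsol(\la,y)=\prox_{R^*\circ\Phi^*/\la}(y/\la)$ and non-expansiveness of the prox (this is precisely where $\norm{y_k-\yorig}/\la_k\to0$ enters), and the second term by a direct optimality comparison at the fixed data $\yorig$, which immediately gives $\norm{\qsol(\la_k,\yorig)}\leq\norm{\qF}$ and then feasibility of any cluster point via lower semicontinuity of $R^*$. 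Your route instead exploits the $\la_k$-strong concavity of the full perturbed dual together with Fenchel--Young at a primal optimum, obtaining a single quantitative inequality that controls both perturbations at once. Your argument is somewhat more computational but yields an explicit bound; the paper's decomposition is cleaner and isolates exactly which hypothesis drives which term.

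\textbf{The gap.} In your step~(2) you invoke Lemma~\ref{lemma-conv-primal-ip} to get $\xsol(p_{k_j})\to\xorig$ and then use closedness of $\partial R$. But Lemma~\ref{lemma-conv-primal-ip} assumes that $\xorig$ is the \emph{unique} solution of $(\Pp(0,\yorig))$, whereas Lemma~\ref{lemma-conv-dual-ip} as stated carries no such hypothesis. So your proof, as written, establishes the lemma only under this extra assumption. The fix is easy and avoids the primal entirely: from optimality of $\qsol_k$ for $(\Dd(p_k))$ against the competitor $\qF$, pass to the $\liminf$ using boundedness of $(\qsol_k)$, $\la_k\to0$, $y_k\to\yorig$, and lower semicontinuity of $R^*$ to obtain
\[
-\dotp{\qlim'}{\yorig}+R^*(\Phi^*\qlim') \leq -\dotp{\qF}{\yorig}+R^*(\Phi^*\qF),
\]
which shows $\qlim'\in\Dsol(0,\yorig)$ directly. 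This is exactly how the paper argues feasibility, and grafting it onto your step~(1) closes the gap.
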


\begin{proof}
By the triangle inequality, we have
\[
\norm{\qsol(\la_k,y_k) - \qF(0,\yorig)} \leq \norm{\qsol(\la_k,y_k) - \qsol(\la_k,\yorig)} + \norm{\qsol(\la_k,\yorig) - \qF(0,\yorig)} .
\]
For the first term, we notice that $\qsol(\la,y)=\prox_{R^*\circ\Phi^*/\la}(y/\la)$ for $\la > 0$. Thus, Lipschitz continuity of the proximal mapping entails that
\[
\norm{\qsol(\la_k,y_k) - \qsol(\la_k,\yorig)} \leq \frac{\norm{y_k-\yorig}}{\la_k},
\]
which in turn shows that $\qsol(\la_k,y_k) \to \qsol(\la_k,\yorig)$.
Let us now turn to the second term. Using the respective optimality of $\qsol_k=\qsol(\la_k,\yorig)$ and $\qF \eqdef \qF(0,\yorig) \in \Dsol(0,\yorig)$, one obtains
\begin{equation}\label{eq-proof-dual-1}
\begin{aligned}
	-\dotp{\qsol_k}{\yorig} + \frac{\la_k}{2} \norm{\qsol_k}^2 + R^*(\Phi^* \qsol_k) &\leq
	-\dotp{\qF}{\yorig} + \frac{\la_k}{2} \norm{\qF}^2 + R^*(\Phi^* \qF) \\
	&\leq -\dotp{\qsol_k}{\yorig} + \frac{\la_k}{2} \norm{\qF}^2 + R^*(\Phi^* \qsol_k) ,
\end{aligned}		
\end{equation} 
whence we get $\norm{\qsol_k} \leq \norm{\qF}$, which shows in particular that $\pa{\qsol_k}_{k \in \NN}$ is a bounded sequence. We can thus extract any converging subsequence, \hl{ which for simplicity, we denote again} $\qsol_k \rightarrow \qlim$. Passing to the limit in~\eqref{eq-proof-dual-1}, using the fact that $R^*$ is lsc, one obtains
\eq{
-\dotp{\qlim}{\yorig} + R^*(\Phi^* \qlim) \leq -\dotp{\qlim}{\yorig} + \liminf_k R^*(\Phi^* \qsol_k) \leq -\dotp{\qF}{\yorig} + R^*(\Phi^* \qF), 
}
which shows that $\qlim \in \Dsol(0,\yorig)$. This together with the fact that $\norm{\qsol_k} \leq \norm{\qF}$ as we already saw, shows that $\qlim = \qF$ by uniqueness of $\qF$ in \eqref{eq-dfn-minnorm-cert}. 
\end{proof}

\subsection{Sensitivity} 
\label{sec-ip-regul-sensitiv}

We are now in position to state the main result of this section, which tracks the strata of $\xsol(p)$ in the regime where the perturbation $\norm{w}=\norm{y-\yorig}$ is sufficiently small. 

\begin{thm}\label{thm-ip}
	Suppose that $\xorig$ is the unique solution to $(\Pp(0,\yorig))$.
	Assume furthermore that $R$ is mirror-stratifiable with respect to the primal-dual stratifications $(\Strat,\Strat^*)$.
	\hl{If there are} constants $(C_0,C_1)$ depending only on $\xorig$ such that for all $p$ in 
	\eql{\label{eq-domain-ip}
		\enscond{p=(\la,y)}{C_0 \norm{y-\yorig} \leq \la \leq C_1 },  
	}
	\hl{then there} exists a minimizer $\xsol(p)$ of $E(\cdot,p)$ localized as follows
	\eql{\label{eq-ident-manif}
		\Man_{\xorig} \leq \Man_{\xsol(p)} \leq {\Jj_{R^*}}(\Man^*_{\uF})
		\qwhereq
		\uF \eqdef \Phi^* \qF(0,\yorig).
	}
\end{thm}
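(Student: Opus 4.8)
The plan is to argue by contradiction and reduce the statement to a single application of Theorem~\ref{thm:main} along a sequence, with Lemmas~\ref{lemma-conv-primal-ip} and~\ref{lemma-conv-dual-ip} supplying the primal and dual convergence needed to invoke it. Suppose no such pair of constants exists; then, taking $C_0 = n$ and $C_1 = 1/n$ for each $n \in \NN$, there is a parameter $p_n = (\la_n,y_n)$ with $n\norm{y_n-\yorig} \le \la_n \le 1/n$ such that \emph{no} minimizer of $E(\cdot,p_n)$ satisfies~\eqref{eq-ident-manif}. If $\la_n = 0$ then $y_n = \yorig$, the unique minimizer of $E(\cdot,p_n)$ is $\xorig$, and applying Theorem~\ref{thm:main} to the constant pair $(\xorig,\uF)$---which is legitimate since $\uF = \Phi^*\qF(0,\yorig) \in \partial R(\xorig)$ by~\eqref{eq-dfn-minnorm-cert}---shows that $\xorig$ already obeys~\eqref{eq-ident-manif}, a contradiction; so $\la_n > 0$. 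The bounds on $p_n$ then yield $\la_n \to 0$, $\norm{y_n-\yorig}/\la_n \le 1/n \to 0$ and hence $\norm{y_n-\yorig}^2/\la_n \le \la_n/n^2 \to 0$, so the hypotheses of both Lemma~\ref{lemma-conv-primal-ip} and Lemma~\ref{lemma-conv-dual-ip} are in force for the sequence $(p_n)_{n\in\NN}$.

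Next I would pick, for each $n$, any minimizer $\xsol_n$ of $E(\cdot,p_n)$; it exists, and the minimizer set is nonempty and compact, by~\eqref{eq:pbinvcoer}. Lemma~\ref{lemma-conv-primal-ip} gives $\xsol_n \to \xorig$, Lemma~\ref{lemma-conv-dual-ip} gives $\qsol(p_n) \to \qF(0,\yorig)$, and hence $u_n \eqdef \Phi^*\qsol(p_n) \to \uF$ by continuity of $\Phi^*$. The primal-dual extremality relation~\eqref{eq:extrprimaldual} ensures $u_n \in \partial R(\xsol_n)$ for every $n$. We are therefore exactly in the setting of Theorem~\ref{thm:main} with $(\xlim,\ulim) = (\xorig,\uF)$ and sequence pair $(\xsol_n,u_n) \to (\xorig,\uF)$: for $n$ large enough, $\Man_{\xorig} \le \Man_{\xsol_n} \le \Jj_{R^*}(\Man^*_{\uF})$, i.e.\ $\xsol_n$ satisfies~\eqref{eq-ident-manif}. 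This contradicts the defining property of $p_n$ for all large $n$, which proves the theorem.

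The main obstacle---and the reason Theorem~\ref{thm-stability-ps} cannot be invoked verbatim here---is the discontinuity of $F$ in~\eqref{eq-ip-setup} at the boundary parameter $\porig = (0,\yorig)$: the dual variable $\Phi^*\qsol(p)$ is not free to converge to an arbitrary point, and it is precisely the conic restriction~\eqref{eq-domain-ip}, which forces $\norm{y-\yorig}/\la \to 0$, that makes the dual solutions converge to the minimum-norm certificate $\qF(0,\yorig)$ (via Lemma~\ref{lemma-conv-dual-ip}) rather than to some other element of $\Dsol(0,\yorig)$. This selection is what pins down the dual stratum $\Man^*_{\uF}$ appearing on the right-hand side of~\eqref{eq-ident-manif}; accordingly, the dual-convergence lemma carries the essential technical weight, everything else being a transcription of the proof of Theorem~\ref{thm-stability-ps}.
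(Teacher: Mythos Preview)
Your proof is correct and follows the same approach as the paper: reduce to Theorem~\ref{thm:main} via the primal and dual convergence Lemmas~\ref{lemma-conv-primal-ip} and~\ref{lemma-conv-dual-ip}. The paper's own proof is a two-line sketch that simply asserts the lemmas apply for $C_1$ small enough and then invokes Theorem~\ref{thm:main}; your contradiction argument with $C_0=n$, $C_1=1/n$ is a faithful and more careful formalization of that sketch, and your explicit treatment of the boundary case $\la_n=0$ is a detail the paper omits.
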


\begin{proof}
Under \eqref{eq-domain-ip} with for instance $C_1$ small enough, \hl{one can easly check} that there exists $k$ large enough such that the regime required for $(y_k,\la_k)$ to apply Lemma~\ref{lemma-conv-primal-ip}~and~\ref{lemma-conv-dual-ip} is attained.
In turn, we have a converging primal-dual pair $(\xsol(p_k),\Phi^*\qsol(p_k)) \rightarrow (\xorig,\uF)$. One can then apply Theorem~\ref{thm:main} to conclude.
\end{proof}

%


\section{Activity \hl{Localization} with Proximal Splitting Algorithms}
\label{sec-algorithm}

Proximal splitting methods are algorithms designed to solve large-scale structured optimization and monotone inclusion problems, by evaluating various first-order quantities such as gradients, proximity operators, linear operators, all separately at various points in the course of \hl{an} iteration. Though they can show slow convergence
each iteration has a cheap cost. We refer to e.g.~\cite{bauschke2011convex,beck2009gradient,combettes2011proximal,parikh2013proximal} for a comprehensive treatment and review. 

Capitalizing on our \hl{enlarged activity identification} result for mirror-stratifiable functions, we now instantiate its consequences on \hl{finite activity localization} of proximal splitting algorithms. While existing results on finite identification \hl{(of a single active set)} strongly rely on partial smoothness around a non-degenerate cluster point~\cite{HareFB11,2015-Liang-InterialFB,2016-Liang-DR,LiangPhD16}, we examine here intricate situations where neither of these assumptions holds.

%
%

%
%


\subsection{Forward-Backward algorithm}
\label{sec-fb}


The Forward--Backward (FB) splitting method \cite{lions1979splitting} is probably the most well-known proximal splitting algorithm. 
In our context, it can be used to solve optimization problems with the additive ``smooth + non-smooth'' structure of the form
\begin{equation}\label{eq:pbFB}
	\min_{x\in \RR^N} f(x) + R(x) ,
\end{equation}
where $f \in C^1(\RR^N)$ is convex with $L$-Lipschitz gradient, and $R$ is a proper lsc and convex function. We assume that $\Argmin(f+R) \neq \emptyset$.
The FB iteration in relaxed form reads~\cite{combettes2004solving} 
\begin{equation}\label{eq:FB}
	\Iter{x} = (1-\tau_k) \iter{x} + \tau_k\prox_{\ga_k R}(\iter{x}-\ga_k\nabla f(\iter{x})) ,
\end{equation}
with $\ga_k \in ]0,2/L[$ and $\tau_k \in ]0,1]$, where $\prox_{\ga R}$, $\ga > 0$, is the proximal mapping of $R$,
\eql{\label{eq:prox}
\Prox_{\gamma R}(x) \eqdef \uargmin{z\in\RR^N} \frac{1}{2}\norm{z-x}^2 + \gamma R(z)  .
}
%
Different variants of FB method were studied, and a popular trend is the inertial schemes which aim at speeding up the convergence (see \cite{Nesterov83,beck2009fista}, and the sequence-convergence version as proved recently in \cite{chambolle2015convergence}).
%

Under the non-degeneracy assumption $-\nabla f(\xsol) \in \ri(\partial R(\xsol))$, it was shown in \cite{2015-Liang-InterialFB} that FB and its inertial variants correctly identify the active manifold in finitely many iterations, and then enter a local linear convergence regime. These results encompass many special cases such as those studied in~\cite{hale07,bredies2008linear,tao2015local}. 
\hl{Beyond this non-degenerate case, we establish now the general localization of active strata}.

\begin{thm}\label{thm-FB}
	Consider the FB iteration \eqref{eq:FB} to solve \eqref{eq:pbFB} with $0 < \inf_k \ga_k \leq \sup_k \ga_k < 2/L$ and $\inf_k \tau_k > 0$. Then $\iter{x}$ converges to $\xsol \in \Argmin(f+R)$. Assume that $R$ is also mirror-stratifiable with respect to $(\Strat,\Strat^*)$, then for $k$ large enough,
	\eq{
		\Man_{\xsol} \leq \Man_{\iter{x}} \leq \Jj_{R^*}(\Man^{*}_{\usol})
		\qwhereq
		\usol \eqdef -\nabla f(\xsol).
	}
\end{thm}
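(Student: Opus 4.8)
The plan is to reduce Theorem~\ref{thm-FB} to an application of the abstract result Theorem~\ref{thm:main}, exactly in the same spirit as the proofs of Theorem~\ref{thm-stability-ps} and Theorem~\ref{thm-ip}. The only thing to supply is a convergent primal-dual sequence $(x_k, u_k)$ with $u_k \in \partial R(x_k)$ and $(x_k,u_k) \to (\xsol, \usol)$ with $\usol = -\nabla f(\xsol)$.

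First I would invoke the known convergence theory for the relaxed Forward--Backward iteration: under the stepsize conditions $0 < \inf_k \ga_k \leq \sup_k \ga_k < 2/L$ and $\inf_k \tau_k > 0$, and with $\Argmin(f+R) \neq \emptyset$, the sequence $(\iter{x})_{k}$ converges to some $\xsol \in \Argmin(f+R)$. This is the statement cited from \cite{combettes2004solving} (or \cite{combettes2011proximal}), so I would just quote it. This gives the primal half $x_k \to \xsol$.

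Next I would extract the dual sequence from the proximal step. Write $z_k \eqdef \iter{x} - \ga_k \nabla f(\iter{x})$ and $w_k \eqdef \prox_{\ga_k R}(z_k)$, so that $\Iter{x} = (1-\tau_k)\iter{x} + \tau_k w_k$. The optimality condition for the prox means $z_k - w_k \in \ga_k \partial R(w_k)$, i.e.\ $u_k \eqdef (z_k - w_k)/\ga_k \in \partial R(w_k)$. To make this a usable primal-dual pair I need $w_k \to \xsol$: since $\Iter{x} - \iter{x} = \tau_k(w_k - \iter{x}) \to 0$ (because $\iter{x}$ converges) and $\inf_k \tau_k > 0$, we get $w_k - \iter{x} \to 0$, hence $w_k \to \xsol$. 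Then $u_k = (\iter{x} - w_k)/\ga_k - \nabla f(\iter{x})$; using $\iter{x} - w_k \to 0$, $\ga_k$ bounded away from $0$, and continuity of $\nabla f$, we obtain $u_k \to -\nabla f(\xsol) = \usol$. Finally, apply Theorem~\ref{thm:main} to the pair of sequences $(w_k, u_k) \to (\xsol, \usol)$ with $u_k \in \partial R(w_k)$: for $k$ large, $\Man_{\xsol} \leq \Man_{w_k} \leq \Jj_{R^*}(\Man^*_{\usol})$, and since $\Man_{\iter{x}} = \Man_{w_k}$ for $k$ large (because $\iter{x} - w_k \to 0$ and both are eventually in the same neighbourhood where Proposition~\ref{prop:strat} pins down the stratum — more precisely, $\Man_{w_k} \geq \Man_{\xsol}$ and $\Man_{\iter{x}} \geq \Man_{\xsol}$, and one must check they coincide), the conclusion follows.

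The main obstacle I anticipate is the last step, identifying $\Man_{\iter{x}}$ with $\Man_{w_k}$ rather than merely $\Man_{w_k}$. Theorem~\ref{thm:main} directly localizes $w_k$ (the output of the prox, where a subgradient lives), but the theorem statement is about $\iter{x}$, which is a relaxed average of $\iter{x}$ and $w_k$. One needs $w_k$ and $\iter{x}$ to lie in the same stratum for $k$ large; this should follow from Proposition~\ref{prop:strat} applied at $\xsol$ since both sequences converge to $\xsol$ and the set of strata met in a small ball around $\xsol$ is exactly $\{\Man \geq \Man_{\xsol}\}$ — but the fine point is that being in the same neighbourhood does not force two points into the same stratum. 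The clean fix is to observe that since $\iter{x} - w_k \to 0$ and $w_k$ eventually lies in a fixed (low-dimensional) stratum $\Man_{w_k} = \Man$, and strata satisfying the given localization form a finite collection, one passes to a subsequence along which $\Man_{w_k}$ is constant and then argues $\Man_{\iter{x}}$ stabilizes to the same stratum; alternatively, if the relaxation is trivial ($\tau_k \equiv 1$) then $\iter{x} = w_{k-1}$ and the issue disappears entirely, and in general one reduces to this by the shifted-index observation $\Iter{x} = (1-\tau_k)\iter x + \tau_k w_k$ with $\iter x \to \xsol$, $w_k \to \xsol$. I would present the argument via the subsequence principle (every subsequence has a further subsequence along which $\Man_{\iter x}$ is eventually constant and equal to some $\Man$ with $\Man_{\xsol} \leq \Man \leq \Jj_{R^*}(\Man^*_{\usol})$, which is what \eqref{eq-ident} requires) to keep it rigorous without over-complicating.
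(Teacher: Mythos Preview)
Your approach matches the paper's exactly: the paper defines $v_{k+1} = (x_{k+1}-x_k)/\tau_k + x_k$ (which is precisely your $w_k$, the prox output), extracts the subgradient $u_{k+1} = (x_k - v_{k+1})/\gamma_k - \nabla f(x_k) \in \partial R(v_{k+1})$, shows $(v_k,u_k) \to (\xsol,\usol)$ using $\inf_k \tau_k > 0$, $\inf_k \gamma_k > 0$ and continuity of $\nabla f$, and then invokes Theorem~\ref{thm:main}.

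Regarding the obstacle you flag --- passing from $\Man_{v_k}$ to $\Man_{\iter{x}}$ --- the paper does not address it at all: it applies Theorem~\ref{thm:main} to $(\iter{v},\iter{u})$ and simply writes ``to conclude''. Strictly speaking, the paper's argument localizes the prox output $v_k$, not $\iter{x}$; when $\tau_k \equiv 1$ these coincide up to an index shift and there is nothing to do, but under genuine relaxation the concern you raise is a real fine point that the paper glosses over, so your extra care is warranted rather than superfluous.
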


\begin{proof}
Convergence of the sequence $\pa{\iter{x}}_{k \in \NN}$ to $\xsol$ is obtained from~\cite[Corollary~6.5]{combettes2004solving}.
Moreover, since the proximal mapping is the resolvent of the subdifferential, \eqref{eq:FB} is equivalent to the monotone inclusion
\[
\Iter{u} \eqdef \frac{\iter{x} - \Iter{v}}{\ga_k} - \nabla f(\iter{x}) \in \partial R(\Iter{v}),
\]
where $\Iter{v} \eqdef \frac{\Iter{x}-\iter{x}}{\tau_k}+\iter{x}$. In turn, with the conditions $\inf_k \tau_k > 0$ and $\inf_k \ga_k > 0$, and continuity of $\nabla f$, we have $\iter{v} \to \xsol$ and thus $\iter{u} \to -\nabla f(\xsol) \in \partial R(\xsol)$. It then remains to apply Theorem~\ref{thm:main} to $(\iter{v},\iter{u})$ and $R$ to conclude.
\end{proof}

%


It can be easily shown that Theorem~\ref{thm-FB} holds for several extensions of the iterate-convergent version of FISTA~\cite{chambolle2015convergence}. We omit the details here for the sake of brevity.



We rather take a closer look to the case when the FB scheme~\eqref{eq:FB} to solve~\eqref{eq-variational-ip-intro-noisy} (see also~\eqref{eq-ip-setup}) for $\la > 0$. 
Putting together Theorem~\ref{thm-ip} and~\ref{thm-FB}, we obtain the following localization result depending only on the data to estimate $\xorig$ assuming that the noise level $\norm{y-\yorig}$ is small enough.
\begin{prop}\label{prop-fb-ip}
	Under the assumptions of Theorem~\ref{thm-ip},
	consider the FB iteration \eqref{eq:FB} to solve \eqref{eq-variational-ip-intro-noisy} with $0 < \inf_k \ga_k \leq \sup_k \ga_k < 2\la/\norm{\Phi}^2$ and $\inf_k \tau_k > 0$.
	%
	%
	Then, for $k$ large enough, the iterates $\iter{x}$ satisfy 
	\eq{
		M_{\xorig} 
		\leq M_{\iter{x}} 
		\leq \Jj_{R^*}(M_{\uF}^*)
		\qwhereq
		\uF \eqdef \Phi^* q(0,\yorig) ,
	}
	with $q(0,\yorig)$ from~\eqref{eq-dfn-minnorm-cert}.
\end{prop}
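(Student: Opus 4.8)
The plan is to combine the two previously established results in sequence, being careful that the hypotheses of Theorem~\ref{thm-FB} are met for the specific function $f$ arising in \eqref{eq-variational-ip-intro-noisy}. First I would fix a parameter $p = (\la, y)$ in the admissible region \eqref{eq-domain-ip} from Theorem~\ref{thm-ip}, with $\norm{y-\yorig}$ small enough (equivalently, $C_1$ chosen small). Applying Theorem~\ref{thm-ip} produces a minimizer $\xsol(p)$ of $E(\cdot,p)$ that is sandwiched as $\Man_{\xorig} \leq \Man_{\xsol(p)} \leq \Jj_{R^*}(\Man^*_{\uF})$ with $\uF = \Phi^* q(0,\yorig)$. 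The point of the proof is that this \emph{same} localization is inherited by the FB iterates, because FB converges to a minimizer and Theorem~\ref{thm-FB} localizes the iterates relative to \emph{that} minimizer's strata.

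The key identification to make is that problem \eqref{eq-variational-ip-intro-noisy} with $\la>0$ fits the template \eqref{eq:pbFB} with $f(x) = F(x,p) = \tfrac{1}{2\la}\norm{y-\Phi x}^2$, which is $C^1$, convex, with $\nabla f(x) = -\tfrac{1}{\la}\Phi^*(y-\Phi x)$ being Lipschitz with constant $L = \norm{\Phi}^2/\la$; since \eqref{eq:pbinvcoer} guarantees $\Argmin(f+R)\neq\emptyset$, all hypotheses of Theorem~\ref{thm-FB} hold once $\ga_k \in ]0, 2/L[ = ]0, 2\la/\norm{\Phi}^2[$ and $\inf_k \tau_k > 0$, which are exactly the stated step-size conditions. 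Theorem~\ref{thm-FB} then gives $\iter{x} \to \xsol$ for some $\xsol \in \Argmin(f+R) = \Argmin(E(\cdot,p))$, and for $k$ large, $\Man_{\xsol} \leq \Man_{\iter{x}} \leq \Jj_{R^*}(\Man^*_{\usol})$ with $\usol = -\nabla f(\xsol) = \tfrac{1}{\la}\Phi^*(y-\Phi\xsol) = \Phi^* \qsol(p)$, using the primal-dual extremality relation \eqref{eq:extrprimaldual}.

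The remaining step — and the one requiring the most care — is to argue that the particular minimizer $\xsol$ to which FB happens to converge enjoys the localization from Theorem~\ref{thm-ip}, i.e.\ that one may take $\xsol = \xsol(p)$ in that theorem's conclusion. The cleanest route is to observe that Theorem~\ref{thm-ip} is really a statement about \emph{every} minimizer reachable as a limit of the primal--dual pairs $(\xsol(p_k), \Phi^*\qsol(p_k))$ along sequences $p_k \to \porig$ satisfying the Lemma~\ref{lemma-conv-primal-ip}--\ref{lemma-conv-dual-ip} regime; since $\xorig$ is the unique solution to $(\Pp(0,\yorig))$, Lemma~\ref{lemma-conv-primal-ip} forces $\xsol(p) \to \xorig$ and Lemma~\ref{lemma-conv-dual-ip} forces $\qsol(p) \to \qF(0,\yorig)$, so the pair $(\xsol, \Phi^*\qsol(p))$ — where $\xsol$ is the FB limit, itself a minimizer of $E(\cdot,p)$ — sits at a point close to $(\xorig, \uF)$. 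Applying Theorem~\ref{thm:main} directly to the convergent sequence furnished by the FB iteration (as in the proof of Theorem~\ref{thm-FB}) together with the fact that $(\xsol,\Phi^*\qsol(p))$ is itself close to $(\xorig,\uF)$ then yields $\Man_{\xorig} \leq \Man_{\iter{x}} \leq \Jj_{R^*}(\Man^*_{\uF})$, the claimed bound. I expect the only genuine subtlety to be bookkeeping: making sure the single region \eqref{eq-domain-ip} simultaneously (a) lies in the regime where Lemmas~\ref{lemma-conv-primal-ip}--\ref{lemma-conv-dual-ip} apply and (b) is compatible with the step-size window $]0,2\la/\norm{\Phi}^2[$ being nonempty — but since $\la > 0$ throughout \eqref{eq-domain-ip}, this window is always nonempty, so no conflict arises.

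\begin{proof}
Fix $p = (\la,y)$ in the region \eqref{eq-domain-ip}, with $C_1$ taken small enough so that the regime of Lemmas~\ref{lemma-conv-primal-ip}~and~\ref{lemma-conv-dual-ip} is attained (as in the proof of Theorem~\ref{thm-ip}). Problem \eqref{eq-variational-ip-intro-noisy} is of the form \eqref{eq:pbFB} with $f(x) = \tfrac{1}{2\la}\norm{y-\Phi x}^2$, which is convex and $C^1(\RR^N)$ with $\nabla f(x) = -\tfrac{1}{\la}\Phi^*(y-\Phi x)$ of Lipschitz constant $L = \norm{\Phi}^2/\la$. By \eqref{eq:pbinvcoer}, $\Argmin(f+R) \neq \emptyset$. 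The conditions $0 < \inf_k \ga_k \leq \sup_k \ga_k < 2\la/\norm{\Phi}^2 = 2/L$ and $\inf_k \tau_k > 0$ are precisely those of Theorem~\ref{thm-FB}. Hence $\iter{x} \to \xsol$ for some $\xsol \in \Argmin(f+R) = \Argmin(E(\cdot,p))$, and, exactly as in the proof of Theorem~\ref{thm-FB}, setting $\Iter{v} = \tfrac{\Iter{x}-\iter{x}}{\tau_k}+\iter{x}$ and $\Iter{u} = \tfrac{\iter{x}-\Iter{v}}{\ga_k} - \nabla f(\iter{x}) \in \partial R(\Iter{v})$, we have $\iter{v} \to \xsol$ and $\iter{u} \to -\nabla f(\xsol) = \tfrac{1}{\la}\Phi^*(y - \Phi\xsol) \in \partial R(\xsol)$.

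Since $\xorig$ is the unique solution to $(\Pp(0,\yorig))$ and $\xsol$ is a minimizer of $E(\cdot,p)$ for $p$ in \eqref{eq-domain-ip}, Lemma~\ref{lemma-conv-primal-ip} shows $\xsol$ is close to $\xorig$ when $C_1$ is small, and by \eqref{eq:extrprimaldual}, $\tfrac{1}{\la}\Phi^*(y-\Phi\xsol) = \Phi^*\qsol(p)$ with $\qsol(p)$ close to $\qF(0,\yorig)$ by Lemma~\ref{lemma-conv-dual-ip}; hence $\Phi^*\qsol(p)$ is close to $\uF = \Phi^*\qF(0,\yorig)$. Applying Theorem~\ref{thm:main} to the convergent pair $(\iter{v},\iter{u}) \to (\xsol, \Phi^*\qsol(p))$, together with the fact that $\Man_{\xorig} \leq \Man_{\xsol}$ (Proposition~\ref{prop:strat}, since $\xsol$ is near $\xorig$) and that $\Man^*_{\Phi^*\qsol(p)} \geq \Man^*_{\uF}$ (again Proposition~\ref{prop:strat}, since $\Phi^*\qsol(p)$ is near $\uF$), and invoking monotonicity of $\Jj_{R^*}$ for $\leq$, we obtain for $k$ large enough
\eq{
	\Man_{\xorig} \leq \Man_{\iter{x}} \leq \Jj_{R^*}(\Man^*_{\uF}),
}
which is the claimed localization.
\end{proof}
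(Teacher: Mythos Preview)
Your proof is correct and follows essentially the same route as the paper: chain the lower bound $\Man_{\xorig} \leq \Man_{\xsol}$ (via closeness of $\xsol$ to $\xorig$ from Lemma~\ref{lemma-conv-primal-ip}) with Theorem~\ref{thm-FB}'s $\Man_{\xsol} \leq \Man_{\iter{x}}$, and for the upper bound use $-\nabla f(\xsol) = \Phi^*\qsol(p)$ close to $\uF$ (Lemma~\ref{lemma-conv-dual-ip}) together with monotonicity of $\Jj_{R^*}$. Your treatment is in fact slightly more careful than the paper's in explicitly checking the Lipschitz constant $L=\norm{\Phi}^2/\la$ and in addressing why the particular FB limit $\xsol$ (rather than ``some'' minimizer) inherits the localization from Theorem~\ref{thm-ip}.
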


\begin{proof}
To lighten notation, denote $p=(\la,y)$. As in Theorem~\ref{thm-FB}, we have $\iter{x} \to \xsol(p)$ a minimizer of \eqref{eq-variational-ip-intro-noisy}. Thus we get
\[
M_{\xorig} \leq M_{\xsol(p)} \leq M_{\iter{x}} ,
\]
where the first inequality comes from Theorem~\ref{thm-ip} and the second one from Theorem~\ref{thm-FB}. This gives the first inequality in the localization result.

Moreover, in the special case at hand $\partial R(\xsol(p)) \ni -\nabla f(\xsol(p))=\Phi^*\qsol(p) \to \uF$ as $p \to (0,\yorig)$, where we invoked~\eqref{eq:extrprimaldual}. It then follows from~\eqref{eq-continuity-inclusion-dual} and the fact that $\Jj_{R^*}$ is decreasing for the relation $\leq$, that
\[
M^*_{\uF} \leq M^*_{-\nabla f(\xsol(p))} \iff \Jj_{R^*}(M^*_{-\nabla f(\xsol(p))}) \leq \Jj_{R^*}(M^*_{\uF}) .
\]
Using once again Theorem~\ref{thm-FB}, we get
\[
M_{\iter{x}} \leq \Jj_{R^*}(M^*_{-\nabla f(\xsol(p))}) \leq \Jj_{R^*}(M^*_{\uF}),
\]
which yields the second desired inequality.
\end{proof}


%
%
Though the iterates $\iter{x}$ of FB do not converge to $\xorig$, this proposition tells us that the iterates identify an enlarged stratum associated to $\xorig$.
This is an appealing feature from a practical perspective, since one can often make some prior assumption on the sought after vector $\xorig$, such as for instance sparsity or low-rank properties, as we have illustrated in the numerical experiments of Section~\ref{sec-numerics}.

\subsection{Douglas-Rachford Splitting Algorithm}

The Douglas-Rachford (DR) method \cite{lions1979splitting} is another popular splitting method designed to minimize convex objectives having the additive ``non-smooth + non-smooth'' structure of the form
\begin{equation}\label{eq:pbDR}
	\min_{x\in \RR^N} f(x) + g(x).
\end{equation}
with $f$ and $g$ be proper lsc and convex functions such that $\ri(\dom(f)) \cap \ri(\dom(g)) \neq \emptyset$ and $\Argmin(f+g) \neq \emptyset$.  The DR scheme reads
\begin{equation}\label{eq:DR}
\begin{cases}
\Iter{v} &= \prox_{\ga f}\pa{2\iter{x} - \iter{z}}  , \\
\Iter{z} &= \iter{z} + {\tau_k}\pa{\Iter{v} - \iter{x}}  , \\
\Iter{x} &= \prox_{\ga g} (\Iter{z})  ,
\end{cases}
\end{equation}
where $\gamma > 0$, $\tau_k \in ]0,2[$ is a relaxation parameter. By definition, the DR method is not symmetric with respect to the order of the functions $f$ and/or $g$. Nevertheless, all of our statements throughout hold true, with obvious adaptations, when the order of $f$ and $g$ is reversed in~\eqref{eq:DR}.

It has been shown in \cite{2016-Liang-DR} that under appropriate non-degeneracy assumptions, the DR identifies the active manifolds in finite time, and then \hl{shows} a local linear regime. 
These results unify all those that were established in the literature for special problems, see e.g. \cite{DemanetZhang13} for linearly constrained $\ell_1$-minimization, \cite{boley2013local} for quadratic or linear programs, \cite{BauschkeDR14} for feasibility with two subspaces. Under mirror-stratifiability of $f$ or $g$, we get the following \hl{enlarged activity identification} result.

\begin{thm}\label{thm-dr}
Consider the DR iteration \eqref{eq:DR} to solve~\eqref{eq:pbDR} with $\tau_k \in ]0,2[$ such that $\sum_{k\in\NN} \tau_k\pa{2-\tau_k} = +\infty$. Then $\iter{z}$ converges to a fixed point $\zsol$ with $\xsol = \prox_{\ga g}(\zsol) \in \Argmin(f+g)$, and $\iter{x}$ and $\iter{v}$ both converge to $\xsol$. Introducing $\usol = \frac{\zsol - \xsol}{\ga}$, we have furthermore:
\begin{enumerate}[label=(\roman*)] 
\item If $g$ is mirror-stratifiable with respect to the primal-dual stratifications $(\Man^g,\Man^{g^*})$, then for $k$ large enough 
	\[
		\Man^g_{\xsol} \leq \Man^g_{\iter{x}} \leq \Jj_{g^*}(\Man^{g^*}_{\usol}).
	\]
\item If $f$ is mirror-stratifiable with respect to the primal-dual stratifications $(\Man^f,\Man^{f^*})$, then for $k$ large enough 
	\[
		\Man^f_{\xsol} \leq \Man^f_{\iter{v}} \leq \Jj_{f^*}(\Man^{f^*}_{-\usol}) .
	\]
%
\end{enumerate}
\end{thm}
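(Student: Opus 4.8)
The plan is to follow the same template as in the Forward--Backward case (Theorem~\ref{thm-FB}): establish convergence of the relevant sequences from known results, rewrite the iteration as a monotone inclusion exhibiting a converging primal--dual pair for $g$ (resp.\ $f$), and then apply Theorem~\ref{thm:main}. First I would recall the convergence statement: under the Douglas--Rachford assumptions $\ri(\dom f)\cap\ri(\dom g)\neq\emptyset$, $\Argmin(f+g)\neq\emptyset$, and $\sum_k \tau_k(2-\tau_k)=+\infty$, the governing sequence $\iter{z}$ converges to a fixed point $\zsol$ of the DR operator; see \cite[Corollary~5.2]{combettes2004solving} (or the analogous reference). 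From the fixed-point relation and nonexpansiveness of $\prox_{\ga g}$ and $\prox_{\ga f}$ one gets $\iter{x}=\prox_{\ga g}(\iter{z})\to \prox_{\ga g}(\zsol)=\xsol$ and $\iter{v}=\prox_{\ga f}(2\iter{x}-\iter{z})\to\prox_{\ga f}(2\xsol-\zsol)=\xsol$, and $\xsol\in\Argmin(f+g)$ by the standard characterization of DR fixed points. Set $\usol=(\zsol-\xsol)/\ga$; since $\zsol$ is a fixed point, $\usol\in\partial g(\xsol)$ and $-\usol\in\partial f(\xsol)$.

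For part~(i), the key is to produce a sequence $u^g_k\to\usol$ with $u^g_k\in\partial g(\iter{x})$. Using that $\prox_{\ga g}$ is the resolvent of $\partial g$, the last line of~\eqref{eq:DR} is equivalent to
\eq{
	u^g_k \eqdef \frac{\Iter{z}-\Iter{x}}{\ga} \in \partial g(\Iter{x}).
}
Since $\iter{z}\to\zsol$ and $\iter{x}\to\xsol$, we have $u^g_k\to(\zsol-\xsol)/\ga=\usol\in\partial g(\xsol)$. Thus $(\iter{x},u^g_k)\to(\xsol,\usol)$ is exactly a converging subdifferential pair, and Theorem~\ref{thm:main} applied to $g$ (which is mirror-stratifiable with respect to $(\Man^g,\Man^{g^*})$) gives, for $k$ large enough, $\Man^g_{\xsol}\leq\Man^g_{\iter{x}}\leq\Jj_{g^*}(\Man^{g^*}_{\usol})$, as claimed. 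For part~(ii), I would similarly rewrite the first line of~\eqref{eq:DR}: since $\iter{v}=\prox_{\ga f}(2\iter{x}-\iter{z})$ is the resolvent of $\partial f$ evaluated at $2\iter{x}-\iter{z}$,
\eq{
	u^f_k \eqdef \frac{(2\iter{x}-\iter{z})-\Iter{v}}{\ga} \in \partial f(\Iter{v}),
}
and passing to the limit, using $\iter{v}\to\xsol$, $\iter{x}\to\xsol$, $\iter{z}\to\zsol$, gives $u^f_k\to(2\xsol-\zsol-\xsol)/\ga=-(\zsol-\xsol)/\ga=-\usol\in\partial f(\xsol)$. Applying Theorem~\ref{thm:main} to $f$ yields $\Man^f_{\xsol}\leq\Man^f_{\iter{v}}\leq\Jj_{f^*}(\Man^{f^*}_{-\usol})$.

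The only genuinely delicate point is the indexing/timing of the resolvent identities: one must be careful that in~\eqref{eq:DR} the quantity $\iter{v}$ is computed from $2\iter{x}-\iter{z}$ at the \emph{current} iterate while $\iter{x}=\prox_{\ga g}(\iter{z})$ involves $\iter{z}$ at the \emph{next} index, so I would double-check that the two subgradient sequences $u^f_k$ and $u^g_k$ are each evaluated at the point ($\iter{v}$, resp.\ $\iter{x}$) whose limit I have identified, and that both auxiliary sequences indeed converge (which follows immediately once $\iter{x},\iter{v},\iter{z}$ all converge). Everything else is routine: the convergence of the DR sequences is quoted, the resolvent reformulation is standard subdifferential calculus, and the localization is a direct invocation of Theorem~\ref{thm:main} in each case. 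The remark that the roles of $f$ and $g$ may be swapped needs no separate argument since~\eqref{eq:DR} with reversed order is the same scheme.
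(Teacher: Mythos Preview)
Your proposal is correct and follows essentially the same approach as the paper: quote convergence of $\iter{z}$ (and hence of $\iter{x},\iter{v}$) from \cite[Corollary~5.2]{combettes2004solving}, rewrite each proximal step as a monotone inclusion to exhibit converging primal--dual pairs $(\iter{x},u^g_k)\to(\xsol,\usol)$ and $(\iter{v},u^f_k)\to(\xsol,-\usol)$, and invoke Theorem~\ref{thm:main}. The only cosmetic difference is that the paper writes the subgradient for $g$ at index $k$ rather than $k{+}1$, which is merely a shift of the same sequence.
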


\begin{proof}
Under the prescribed choice of $\tau_k$, convergence of $\iter{z}$ is ensured by virtue of \cite[Corollary~5.2]{combettes2004solving}. By non-expansiveness of the proximal mapping, and as we are in finite dimension, we also obtain convergence of $\iter{x}$ and $\iter{v}$ to $\xsol$. To prove (i), note that the update of $\iter{x}$ in~\eqref{eq:DR} is equivalent to the monotone inclusion
\begin{equation}\label{eq:dr-g-moninc}
\iter{u} \eqdef \frac{\iter{z} - \iter{x}}{\ga} \in \partial g(\iter{x}) .
\end{equation}
Since $(\iter{x},\iter{u}) \to (\xsol,\usol)$, we conclude about (i) by invoking Theorem~\ref{thm:main}. Similarly, we note that 
the update of $\iter{v}$ in~\eqref{eq:DR} is equivalent to
\begin{equation}\label{eq:dr-f-moninc}
\iter{w} \eqdef \frac{2\iter{x} - \iter{z} - \Iter{v}}{\ga} \in \partial f(\Iter{v}) .
\end{equation}
Using that $(\iter{v},\iter{w}) \to (\xsol,-\usol)$ and applying Theorem~\ref{thm:main} we get (ii).
\end{proof}



In the same vein as for FB in the previous section, we now turn to applying the DR scheme \eqref{eq:DR} to solve~\eqref{eq-variational-ip-intro-noisy} by setting in \eqref{eq:pbDR} $f(x)=\frac{1}{2\la} \norm{y - \Phi x}^2$ and $g(x)=R(x)$. 
Putting Theorem~\ref{thm-ip} and~\ref{thm-dr}-(i) together, we obtain the following analogue to Proposition~\ref{prop-fb-ip}.

\begin{prop}\label{prop-dr-ip}
Under the assumptions of Theorem~\ref{thm-ip}, 
consider the DR iteration \eqref{eq:DR} to solve~\eqref{eq-variational-ip-intro-noisy} with $\ga > 0$ and $\tau_k \in ]0,2[$ such that $\sum_{k\in\NN} \tau_k\pa{2-\tau_k} = +\infty$.
Then, for $k$ large enough, the DR iterates satisfy 
	\eq{
		M_{\xorig} 
		\leq M_{\iter{x}} 
		\leq \Jj_{R^*}(M_{\uF}^*)
		\qwhereq
		\uF \eqdef \Phi^* q(0,\yorig).
	}
\end{prop}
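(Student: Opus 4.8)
The plan is to mirror exactly the argument used for the Forward--Backward case in Proposition~\ref{prop-fb-ip}, swapping in the DR-specific facts established in Theorem~\ref{thm-dr}-(i). First I would set $p=(\la,y)$ and invoke Theorem~\ref{thm-dr}-(i) with $f(x)=\tfrac{1}{2\la}\norm{y-\Phi x}^2$ and $g(x)=R(x)$: since $f\in C^1(\RR^N)$ is convex with Lipschitz gradient and $\ri(\dom f)\cap\ri(\dom g)=\ri(\dom R)\neq\emptyset$ (as $\dom f=\RR^N$), the hypotheses of Theorem~\ref{thm-dr} hold, so the DR iterates $\iter{x}$ converge to a minimizer $\xsol(p)$ of \eqref{eq-variational-ip-intro-noisy}, and for $k$ large enough $\Man_{\xsol(p)}\leq\Man_{\iter{x}}\leq\Jj_{R^*}(\Man^*_{\usol})$ with $\usol=\tfrac{\zsol-\xsol(p)}{\ga}\in\partial R(\xsol(p))$.

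Next I would handle the two inequalities separately. For the lower bound, Theorem~\ref{thm-ip} (whose assumptions are in force by hypothesis, once $C_1$ is taken small enough and $\norm{y-\yorig}$ small enough so that the domain \eqref{eq-domain-ip} is met) gives the existence of a minimizer whose stratum is $\geq\Man_{\xorig}$; but since \eqref{eq:extrprimaldual} shows the dual solution $\qsol(p)$ is unique and $\Phi^*\qsol(p)\in\partial R(\xsol(p))$ for \emph{every} primal minimizer, the conclusion of Theorem~\ref{thm:main} in fact applies to the specific minimizer $\xsol(p)$ that the DR sequence selects, so $\Man_{\xorig}\leq\Man_{\xsol(p)}$. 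Chaining with the DR lower bound $\Man_{\xsol(p)}\leq\Man_{\iter{x}}$ gives $\Man_{\xorig}\leq\Man_{\iter{x}}$.

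For the upper bound, the key observation (already used in Proposition~\ref{prop-fb-ip}) is that in this setting $\usol$ is precisely $-\nabla f(\xsol(p))=\Phi^*\qsol(p)$ by \eqref{eq:extrprimaldual}; and as $p\to(0,\yorig)$ along the admissible regime, $\Phi^*\qsol(p)\to\Phi^*\qF(0,\yorig)=\uF$ by Lemma~\ref{lemma-conv-dual-ip}. Since $\partial R$ is sequentially closed, $\uF\in\partial R(\xorig)$, and $\uF$ belongs to the closure of the relative interior it came from, so by \eqref{eq-continuity-inclusion-dual} (continuity of strata under convergence) $\Man^*_{\uF}\leq\Man^*_{\usol}$, and using that $\Jj_{R^*}$ is decreasing for $\leq$ we get $\Jj_{R^*}(\Man^*_{\usol})\leq\Jj_{R^*}(\Man^*_{\uF})$. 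Combining with the DR upper bound $\Man_{\iter{x}}\leq\Jj_{R^*}(\Man^*_{\usol})$ yields $\Man_{\iter{x}}\leq\Jj_{R^*}(\Man^*_{\uF})$, which is the second desired inequality. The only mildly delicate point — the main obstacle, such as it is — is bookkeeping the two limiting regimes: $k\to\infty$ (DR iterates converging to $\xsol(p)$ for fixed $p$) versus $p\to(0,\yorig)$ (the regularization limit), and making sure the ``for $k$ large enough'' in Theorem~\ref{thm-dr} and the ``$\norm{y-\yorig}$ small enough'' in Theorem~\ref{thm-ip} are applied in the right order; but since both bounds are stratum inclusions that only need to hold eventually, no uniformity is required and the chaining goes through verbatim as in Proposition~\ref{prop-fb-ip}.
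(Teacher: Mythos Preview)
Your proposal is correct and follows essentially the same route as the paper: mirror the chaining argument of Proposition~\ref{prop-fb-ip}, replacing Theorem~\ref{thm-FB} by Theorem~\ref{thm-dr}-(i), and observe that the DR limit $\usol=\tfrac{\zsol-\xsol(p)}{\ga}$ coincides with $-\nabla f(\xsol(p))=\Phi^*\qsol(p)$. The only place where you could tighten the write-up is the justification of that last identity: it does not follow from \eqref{eq:extrprimaldual} alone but from the DR fixed-point relations \eqref{eq:dr-g-moninc}--\eqref{eq:dr-f-moninc} (at the fixed point, $\xsol=\prox_{\ga f}(2\xsol-\zsol)$ gives $-\usol=\nabla f(\xsol)$ since $f$ is differentiable), which is exactly the ``additional observation'' the paper singles out.
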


\begin{proof}
The proof follows the same reasoning as that of Proposition~\ref{prop-fb-ip} by additionally observing (recall the notation in the proof of Theorem~\ref{thm-dr}) that from~\eqref{eq:dr-g-moninc}-\eqref{eq:dr-f-moninc}, we have
$\usol(p)=-\nabla f(\xsol(p)) = \Phi^*\qsol(p) \in \partial R(\xsol(p))$.
\end{proof}

%

\section{Numerical Illustrations}
\label{sec-numerics}
In this section, we numerically illustrate our theoretical results on sensitivity and \hl{enlarged activity identification} in the context of regularized inverse problems. 
We adopt the same two ``compressed sensing'' scenarios described in Section~\ref{sec-motivnum}.
The dimension $\dim(\Man_{\xorig}) = R_0(\xorig)$ of the strata associated to $\xorig$ is measured as  $R_0=\norm{\cdot}_0$ (resp. $R_0=\rank$) for the $\ell_1$ (resp. nuclear) norm regularization.
%

\paragraph{Strata sensitivity}

\hl{We first illustrate the relevance of the strata sensitivity result in Theorem~\ref{thm-ip} by studying the dimension of the largest possible active stratum ${\Jj_{R^*}}(\Man^*_{\uF})$ (in fact its closure).
The dual $\uF=\Phi^* \qF(0,\yorig)$ is computed from $\xorig$ by solving the convex optimization problem~\eqref{eq-dfn-minnorm-cert} (using CVX to get a high precision). Thus we know the maximum complexity index excess predicted by Theorem~\ref{thm-ip}, i.e. 
\eq{
	\de^\star(\xorig) \eqdef \dim( {\Jj_{R^*}}(\Man^*_{\uF}) ) -  \dim(\Man_{\xorig}).
}
For each given $\delta$ and $R_0(\xorig)$, and among the 1000 randomly generated replications of $(\xorig,\Phi)$, we compute the proportion $\rho(R_0(\xorig),\delta)$ of $\xorig$ such that it is the unique solution of $(\Pp(0,\yorig))$ and $\de^\star(\xorig) \leq \de$. The proportions $\rho(R_0(\xorig),\delta)$ are displayed in Figure~\ref{fig-phase-transition} as a function of the input complexity index $R_0(\xorig) =  \dim(\Man_{\xorig})$. The colors from blue to red correspond to increasing $\de$.}

\hl{The proportion $\rho(R_0(\xorig),\delta)$ is an increasing function of $\delta$ and a decreasing function of $R_0(\xorig)$. Indeed, as anticipated from standard compressed sensing results~\cite{2014-vaiter-ps-review}, active strata $M_{\xorig}$ of vectors $\xorig$ whose dimension is small enough compared to the number of measurements $P$ can be provably and stably recovered with overwhelming probability (on the sampling of $(\xorig,\Phi,w)$). As $R_0(\xorig)$ increases, the number of measurements $P$ becomes insufficient to ensure non-degeneracy with high probability, hence preventing stable recovery of $M_{\xorig}$. However, Theorem~\ref{thm-ip} predicts that the active stratum of $\xsol(\la,y)$ for $y$ nearby $\yorig$ is localized between $M_{\xorig}$ and ${\Jj_{R^*}}(\Man^*_{\uF})$.}

\hl{The blue curve in each plot of Figure~\ref{fig-phase-transition} corresponds to $\de=0$, which is the proportion $\rho(R_0(\xorig),0)$ of vectors $\xorig$ whose active stratum $\Man_{\xorig}$ can be recovered stably under small noise perturbation by solving~\eqref{eq-variational-ip-intro-noisy} for $\la$ chosen according to~\eqref{eq-domain-ip}. This proportion shows a phase transition phenomenon between stable recovery and unstable recovery. The location of the phase transition for $\de=0$ can be predicted accurately; see for instance~\cite{2014-vaiter-ps-consistency}.}

\hl{The red curves in Figure~\ref{fig-phase-transition} correspond to the extreme case where $\de$ takes its largest achievable value, i.e.\;where we can guarantee recovery of the largest stratum ${\Jj_{R^*}}(\Man^*_{\uF})$ with high probability. The phase transition occurs for higher dimension $R_0(\xorig)$. The intermediate curves, i.e.\;from blue to red, correspond to the recovered strata that are localized between $M_{\xorig}$ and ${\Jj_{R^*}}(\Man^*_{\uF})$ (i.e.\;increasing $\de$). The phase transition progressively increases with $\de$. These curves illustrate and quantify the typical tradeoff observed in practice: one can allow for more complex input vectors~$\xorig$ (i.e.\,those with larger $R_0(\xorig)$) at the expense of recovering active strata larger than~$\Man_{\xorig}$.}
%

%

\begin{figure}\centering
\begin{tabular}{@{}c@{\hspace{1mm}}c@{}}
\includegraphics[width=.49\linewidth]{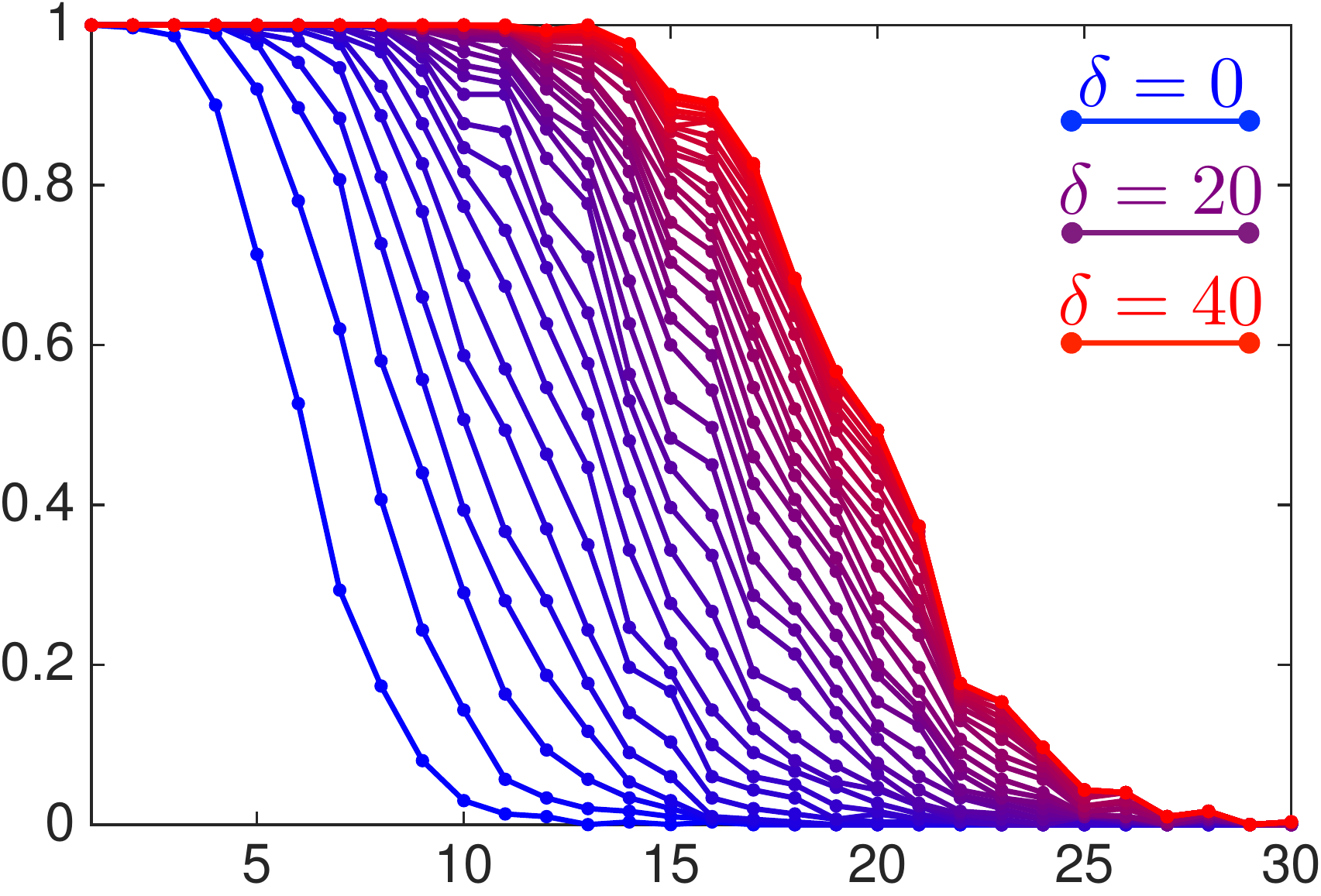} &
\includegraphics[width=.49\linewidth]{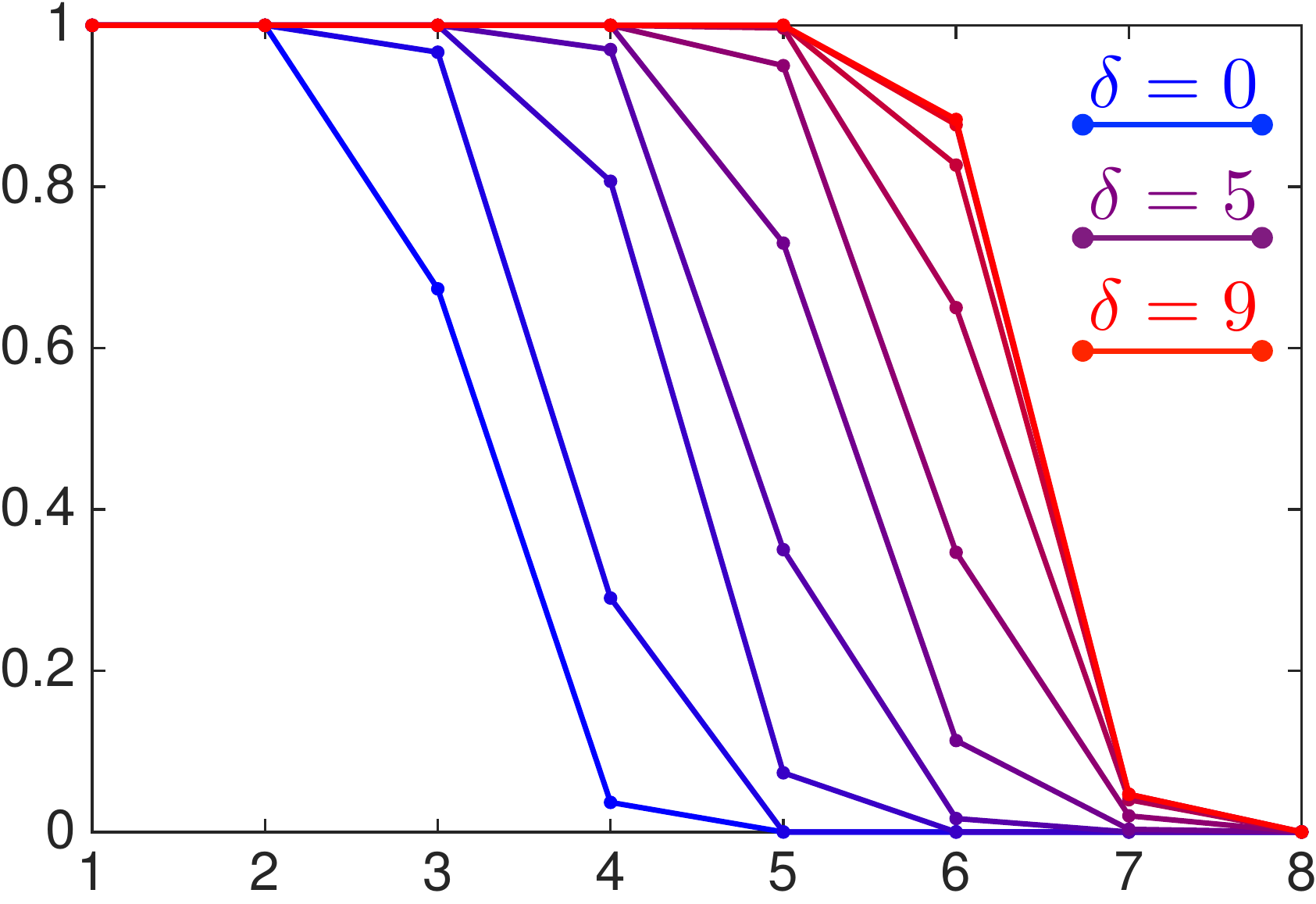}\\
$R=\norm{\cdot}_1$ & $R=\norm{\cdot}_*$
\end{tabular}
\caption{\label{fig-phase-transition}
Proportion of $\xorig$ such that $\de^\star(\xorig) \leq \de$ as a function of $R_0(\xorig)$ (increasing value of $\de$ from $0$ to its maximal value is depicted by a color evolving from blue to red). 
%
%
}
\end{figure}

\begin{figure}\centering
\begin{tabular}{@{}c@{\hspace{1mm}}c@{}}
\includegraphics[width=.49\linewidth]{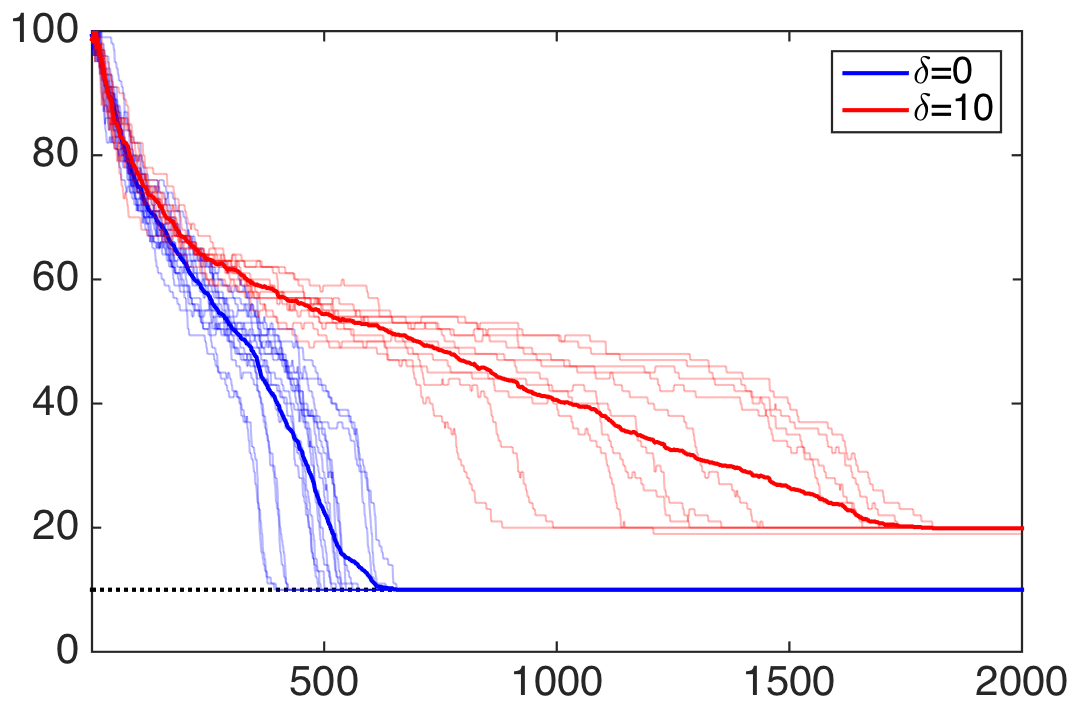} &
\includegraphics[width=.49\linewidth]{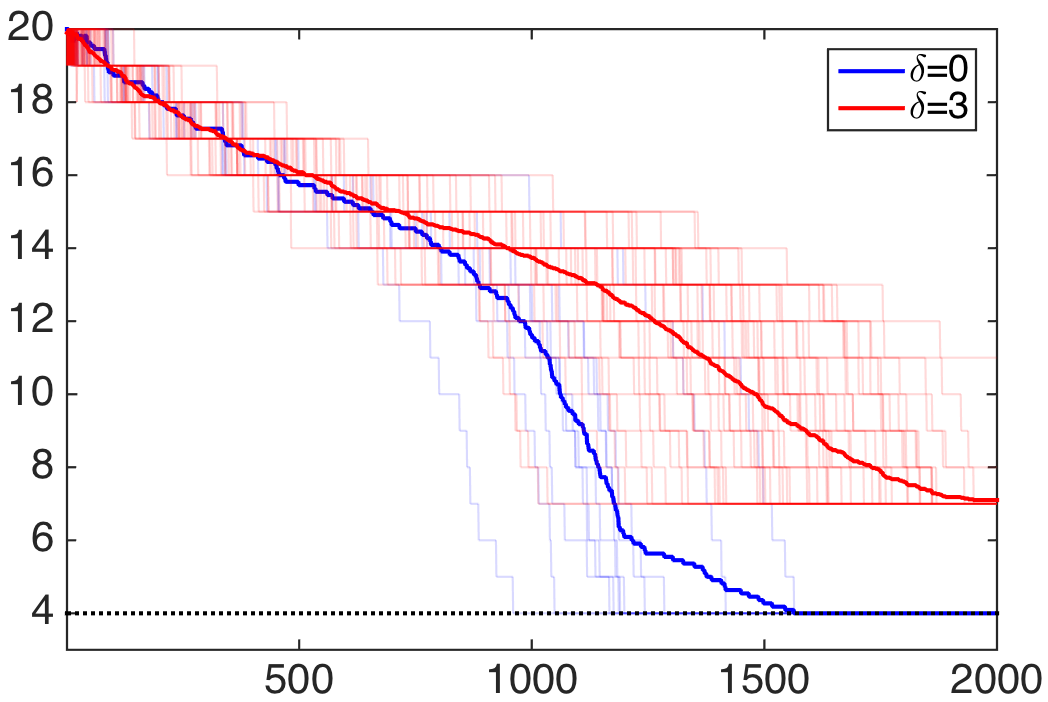}\\
$R=\norm{\cdot}_1$ & $R=\norm{\cdot}_*$
\end{tabular}
\caption{\label{fig-algorithm-paths}
Plots of $R_0(x_k)$ for the 2000 first iterates generated by the Forward-Backward algorithm. Plots in blue correspond to 
the cases when $\de^\star(\xorig)=0$, and the red ones for $\de^\star(\xorig)=\de$.}
\end{figure}

\paragraph{Forward-Backward \hl{finite activity localization}}

We now numerically illustrate the \hl{finite enlarged activity identification} of the FB splitting scheme as predicted by Theorem~\ref{thm-FB} and Proposition~\ref{prop-fb-ip}. We remain under the same compressed sensing setting as before. The randomly generated replications of $\xorig$ are such that $R_0(\xorig)=10$ for $R=\norm{\cdot}_1$ and to $4$ for $R=\norm{\cdot}_*$.

The evolution of the complexity index $R_0(\iter{x})$ of the FB iterate $\iter{x}$ is shown in Figure~\ref{fig-algorithm-paths}. 
The blue lines correspond to several trajectories (the bold one is the average trajectory), each for a randomly generated instance of $\xorig$ such that $\de^\star(\xorig)=0$, i.e. those vectors whose active strata $\Man_{\xorig}$ can be exactly recovered under small perturbations. Thus, the iterates identify $\Man_{\xorig}$ in finite time.
The red lines (the bold one is the average trajectory) are those for which $\de^\star(\xorig)=\delta > 0$. As anticipated by our theoretical results, the iterates identify a stratum strictly larger that $\Man_{\xorig}$. 
%



{\small
\section*{Acknowledgements}

The work of Gabriel Peyr\'e has been supported by the European Research Council (ERC project SIGMA-Vision).
Jalal Fadili was partly supported by Institut Universitaire de France.
}

\appendix


\section{Proofs of the results in Section 2}\label{appendix}

\paragraph{Proof of Proposition~\ref{prop:strat}}
Let us first prove the first equivalence. 
Observe that Definition~\ref{eq-active-def} can be read as follows: $\Man$ is active at $x$  if and only if $d(x,\cl(\Man))=0$. Since there is a finite number of strata in the stratification, let us consider $\delta$ the minimum of the nonzero distances $d(x,\cl( \Man'))$ for $\Man'$ not active. For all $x'$ in the open ball of radius $\delta$ and of center $x$, we have
\[
	d(x,\cl(\Man_{x'})) \leq \norm{x-x'}<\delta \qquad \iff \qquad d(x,\cl(\Man_{x'}))= 0 .
\]
This shows that the set of these strata $\Man_{x'}$ indeed coincides the set of active strata, \hl{whence we get the first equivalence}. 

\hl{Let us turn to the second equivalence.} 
Let $\Man$ be an active strata at $x$, that is, $x\in \cl(\Man)$. Since $x\in \Man_x$, the intersection $\cl(\Man)\cap\Man_x$ contains $x$ and thus is nonempty. We deduce from \eqref{eq-order-strata} that $\Man \geq \Man_x$. Conversely, if $\Man \geq \Man_x$, then $x\in \Man_x \subset \cl(\Man)$ and therefore $\Man$ is active at $x$. 

\paragraph{Proof of Proposition~\ref{prop-polyhedral}}

From classical convex analysis calculus rules, we get for all $x\in \dom R$
\eq{
	\partial R(x) = \conv\enscond{a_i}{i\in \Imax(x)} + \cone\enscond{a_i}{i\in \Ifeas(x)} .
}
By definition of $\Man_I$, the relative interior of $\partial R(x)$ is constant over a stratum: for all 
$x\in \Man_I$ (with $\Man_I\neq \emptyset$) 
\begin{eqnarray*}
\ri(\partial R(x)) &=& \ri \big(\conv\enscond{a_i}{i\in \Imax(x)} + \cone\enscond{a_i}{i\in \Ifeas(x)}\big)\\
 &=& \ri (\conv\enscond{a_i }{i\in \Imax(x) }) + \ri(\cone\enscond{a_i}{ i\in \Ifeas(x)})\\
 &=& \ri (\conv\enscond{a_i }{i\in \Imax}) + \ri(\cone\enscond{a_i}{i\in \Ifeas}) .
\end{eqnarray*}
This yields $\Jj_R(\Man_I)= \Man^*_I$. Conversely we have $\partial R^*(u) = \enscond{x}{u\in \partial R(x)}$, which entails for all $u\in M_I^*$
\[
\partial R^*(u) = \enscond{x}{\Imax(x)\cap\Ifeas(x) \supset I} ,
\]
and therefore $\ri(\partial R^*(u)) = M_I$. This gives $\Jj_{R^*}(\Man^*_I)= \Man_I$, which proves item (i) of Definition~\ref{def:mirstrat}.

To show (ii) of Definition~\ref{def:mirstrat}, we make the following observation:
\begin{align*}
\Man_I \leq \Man_{I'}
&\Longleftrightarrow
\text{any $x\in \Man_I$ lies in $\cl(\Man_{I'})$}\\
&\Longleftrightarrow
\Imax=\Imax(x)\supset {(I')}^{\max} \qandq \Ifeas=\Ifeas(x)\supset {(I')}^{\feas}\\
&\Longleftrightarrow
I\supset I' .
\end{align*}
On the other hand we have 
\begin{align*}
\cl(\Jj_R(\Man_{I}))
&\supset \cl(\ri(\conv\enscond{a_i}{i\in \Imax})) + \cl(\ri\cone\enscond{a_i}{i\in \Ifeas}))\\
&=\conv\enscond{a_i}{i\in \Imax} + \cone\enscond{a_i}{i\in \Ifeas}.
\end{align*}
Note that the first $\supset$ is in fact $=$ because $\conv\enscond{a_i}{i\in \Imax}$ is compact.
Using unique decomposition of a polyhedron, we can write, 
\[
\Jj_R(\Man_I) \geq \Jj_R(\Man_{I'}) \Longleftrightarrow \cl(\Jj_R(\Man_I)) \supset \Jj_R(\Man_{I'}) \Longleftrightarrow I\supset I',
\]
which ends the proof.

\paragraph{Proof of Proposition~\ref{prop:stratspectral}}

The proof builds upon a key result stated in \cite{daniilidis2013orthogonal}. Theorem~4.6(i) in \cite{daniilidis2013orthogonal} asserts that the collection $\enscond{\si^{-1}(\Man^{\sym})}{\Man \in \Strat}$ forms a smooth stratification of $\dom(R)$ with the desired properties.
The fact that spectral functions are mirror-stratifiable follows from the polyhedral case with the help of Theorem~4.6(iv) in \cite{daniilidis2013orthogonal}, which states that 
\[
\Jj_{R}(\si^{-1}(\Man^{\sym})) = \si^{-1}\big(\Jj_{R^{\sym}}(\Man^{\sym})\big)
\]
together with continuity of the singular value mapping $\si$. 

\bibliographystyle{plain}
\bibliography{biblio-algo,biblio-imaging,biblio-sensitivity}

\end{document}